\title{\bf Harnack Inequalities for SDEs with Multiplicative Noise and Non-regular Drift}
\author{Huaiqian Li$^a$\footnote{Email: huaiqianlee@gmail.com.}
\quad  Dejun Luo$^b$\footnote{Email: luodj@amss.ac.cn. Supported in
part by the Key Laboratory of RCSDS, CAS (2008DP173182), NSFC
(11101407) and AMSS (Y129161ZZ1)} \quad Jian
Wang$^c$\footnote{Email: jianwang@fjnu.edu.cn. Supported in part by
NSFC (11201073) and the Program for New Century Excellent Talents in
Universities of Fujian (JA12053)}
\vspace{3mm}\\
{\footnotesize $^a$School of Mathematics, Sichuan University, Chengdu 610064, China}\\
{\footnotesize $^b$Institute of Applied Mathematics, Academy of Mathematics and Systems Science,}\\
{\footnotesize Chinese Academy of Sciences, Beijing 100190, China}\\
{\footnotesize $^c$School of Mathematics and Computer Science, Fujian Normal University,
Fuzhou 350007, China}
}
\date{}
\def\R{\mathbb{R}}
\def\E{\mathbb{E}}
\def\P{\mathbb{P}}
\def\Id{\hbox{\rm Id}}
\def\B{\mathcal{B}}
\def\F{\mathcal{F}}
\newcommand{\ra}{\rightarrow}
\def\Tr{\textup{Tr}}
\def\d{\textup{d}}
\def\<{\langle}
\def\>{\rangle}
\newtheorem{theorem}{Theorem}[section]
\newtheorem{lemma}[theorem]{Lemma}       
\newtheorem{proposition}[theorem]{Proposition}
\begin{document}

\maketitle

\makeatletter 
\renewcommand\theequation{\thesection.\arabic{equation}}
\@addtoreset{equation}{section}
\makeatother 

\vspace{-6mm}

\begin{abstract}
The log-Harnack inequality and Harnack inequality with powers for semigroups
associated to SDEs with non-degenerate diffusion coefficient and non-regular
time-dependent drift coefficient are established, based on the recent papers
\cite{Flandoli, Zhang11}. We consider two cases in this work: (1) the drift fulfills
the LPS-type integrability, and (2) the drift is uniformly H\"older continuous
with respect to the spatial variable. Finally, by using explicit heat kernel
estimates for the stable process with drift, the Harnack inequality for the stochastic differential equation driven by symmetric stable
process is also proved.
\end{abstract}

{\bf Keywords:} Harnack inequality, LPS condition, H\"older continuity, Zvonkin-type
transformation

{\bf Mathematics Subject Classification (2010):} 60H10

\section{Introduction and Main Results}
The dimension-free Harnack inequality with powers introduced in
\cite{Wang97} and the  log-Harnack inequality introduced in
\cite{Rockner} have been intensively investigated for various
stochastic (partial) differential equations. They are efficiently
applied to study heat kernel estimates, functional inequalities,
transportation-cost inequalities and properties of invariant
measures, see e.g.\ \cite{Wang-book} and references therein.
Consider the following stochastic differential equation (SDE) on
$\R^d$:
  \begin{equation}\label{SDE}
  \d X_t=\sigma(t,X_t)\,\d W_t+b(t,X_t)\,\d t,\quad X_0=x,
  \end{equation}
where $\sigma:[0,\infty)\times\R^d\ra\R^d\otimes\R^d$ and $b:[0,\infty)\times\R^d\to \R^d$ are two
Borel measurable functions, and $(W_t)_{t\ge0}$ is a standard $d$-dimensional
Brownian motion defined on a complete filtered probability space $(\Omega, \F,\P, (\F_t)_{t\geq 0})$.
When the equation \eqref{SDE} has a unique solution
for any starting point $x$, we denote it by $X_t(x)$ and define the
associated Markov semigroup $(P_t)_{t\ge0}$ as follows:
  $$P_tf(x)=\E f(X_t(x)), \quad t\ge0, x\in\R^d, f\in \B_b(\R^d).$$
If the coefficients $\sigma$ and $b$ are semi-Lipschitz continuous with
respect to the spatial variable locally uniformly in the time variable,
Harnack inequalities for $P_t$ have been established in
\cite{Wang11}; see Theorem \ref{sect-2-thm} below for the explicit
statement. Recently, Harnack inequalities for \eqref{SDE} with
log-Lipschitz continuous coefficients have been studied in
\cite{SWY}. The aim of this paper is to consider Harnack
inequalities for SDE \eqref{SDE} with non-regular time-dependent drift
coefficient.

\subsection{Drift satisfying the LPS-type condition}
We first consider the case where the drift $b$ satisfies an integrability condition,
which is known in fluid dynamics as the Ladyzhenskaya--Prodi--Serrin condition
(LPS condition for short). More precisely, assume that the drift coefficient
$b\in L_{loc}^q\big([0,\infty),L^p(\R^d)\big) $ for some $p>d$ and $q>2$ such that
  \begin{equation}\label{condition}
  \frac dp +\frac 2q<1,
  \end{equation}
that is, for all $T>0$, it holds
  \begin{equation}\label{LPS}
  \int_0^T\bigg(\int_{\R^d}|b(t,x)|^p\,\d x\bigg)^{q/p}\,\d t<+\infty.
  \end{equation}
The SDE \eqref{SDE} with diffusion coefficient $\sigma=\Id$
and drift coefficient $b$ satisfying \eqref{LPS} was first
studied by Krylov and R\"ockner \cite{KR05}, where the existence of the unique strong
solution was proved. X. Zhang \cite{Zhang05b} extended their result to the more general
SDE \eqref{SDE} with variable diffusion coefficient $\sigma$. Furthermore,
it was shown in \cite{Zhang11} (see also \cite{FF13}) that SDE \eqref{SDE} generates
a unique stochastic flow $X_t$ of homeomorphisms on $\R^d$, provided that
the diffusion coefficient $\sigma$ satisfies the following conditions:
\begin{itemize}
\item[(H$^\sigma_1$)] $\sigma(t,x)$ is uniformly continuous in $x\in\R^d$ locally uniformly
with respect to $t$, and there exist positive constants $K$ and $\delta$ such that for all
$(t,x)\in [0,\infty)\times\R^d$,
  \begin{equation}\label{non-degeneracy}
  \delta|y|^2\leq \big|\sigma(t,x)^\ast\, y\big|^2\leq K|y|^2,
  \end{equation}
where $\sigma(t,x)^\ast$ is the transposition;\footnote{
We point out that the non-degeneracy condition {\rm (H$^\sigma_1$)} in \cite{Zhang11} is
incorrect, and it reads as
  \begin{equation}\label{non-degeneracy-1}
  \delta|y|^2\leq \sum_{ik}|\sigma^{ik}(t,x) y_i|^2\leq K|y|^2.
  \end{equation}
It is clear that \eqref{non-degeneracy} implies \eqref{non-degeneracy-1}, but the
inverse implication does not hold. To see the latter, simply look at the $(2\times 2)$
constant matrix
  $$\sigma=\left(\begin{array}{cc}
  1 & -1\\
  -1 & 1
  \end{array}\right).$$
Nevertheless, the arguments of \cite{Zhang11} still work and the results all hold true. }
\item[(H$^\sigma_2$)] $|\nabla\sigma(t,\cdot)|\in L_{loc}^q\big([0,\infty),L^p(\R^d)\big)$ with
the same $p,q$ as in \eqref{condition}, where $\nabla$ denotes the generalized gradient with respect to $x$.
\end{itemize}

In the recent paper \cite{BFGM}, Beck et al.\ considered SDE \eqref{SDE} with
$\sigma=\Id$ and a drift $b$ satisfying the generalized LPS condition (i.e., `$<$' in
\eqref{condition} is replaced by `$\leq$'). They first established the well-posedness of the
corresponding stochastic continuity (and also transport) equation, from which they deduced
the existence of a unique Lagrangian flow associated to \eqref{SDE}. Notice that,
however, in the limit case $(p,q)=(d,\infty)$, they assumed in addition that
the $L_{loc}^\infty\big([0,\infty),L^d(\R^d)\big)$-norm of the drift $b$ is small enough (see
\cite[Condition 8]{BFGM}), thus leaving the general case as an open question.

Inspired by X. Zhang's work \cite{Zhang11}, J. Shao established in \cite[Theorem 2.1]{Shao} the
Harnack inequalities for SDE \eqref{SDE} by using the coupling method, under some
additional assumptions (see (H$^\sigma_3$), (H$^\sigma_4$) and (H$^b$) in \cite{Shao}).
However, there are some extra constants on the
right hand sides of the inequalities \cite[(2.3) and (2.4)]{Shao}. In the next theorem,
we remove the additional constant in the log-Harnack inequality \cite[(2.3)]{Shao};
moreover, we do not need the extra conditions (H$^\sigma_3$) and (H$^b$)
in \cite{Shao}.

\begin{theorem}\label{thm-log-Harnack}
Assume that $\sigma$ fulfills {\rm (H$^\sigma_1$)} and {\rm (H$^\sigma_2$)},
and $b\in L^q_{loc}\big([0,\infty),L^p(\R^d)\big)$ with $p,q$ verifying \eqref{condition}.
Let $P_{t}$ be the semigroup associated to \eqref{SDE}. Then, for any $T>0$, there is a positive constant $C>0$ such that the following log-Harnack inequality
  \begin{equation}\label{Log-Harnack-1}
  P_{t}\log f(y)\leq \log P_{t}f(x)+\frac{C|y-x|^2}{\delta(t-s)},\quad x,y\in\R^d
  \end{equation}
holds for $0<t\le T$ and $f\in \B_b(\R^d)$ with $f\geq 1$, where $\delta$ is the constant
in {\rm (H$^\sigma_1$)}.
\end{theorem}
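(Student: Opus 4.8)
The plan is to reduce the SDE with non-regular drift to one with smooth (or at least Lipschitz) coefficients via the Zvonkin-type transformation, apply the known log-Harnack inequality for the transformed equation, and then transfer the inequality back. Concretely, following the strategy of \cite{Zhang11}, I would first fix $T>0$ and introduce, for a suitable large parameter $\lambda>0$, the solution $u=u_\lambda:[0,T]\times\R^d\to\R^d$ of the backward parabolic system
  \begin{equation*}
  \partial_t u+\tfrac12\Tr\big(\sigma\sigma^\ast\nabla^2 u\big)+b\cdot\nabla u+b-\lambda u=0,\qquad u(T,\cdot)=0.
  \end{equation*}
By the $L^p$--$L^q$ parabolic regularity theory used in \cite{KR05,Zhang05b,Zhang11} (which is exactly where the LPS condition \eqref{condition} enters), for $\lambda$ large enough $u$ is bounded together with $\nabla u$, with $\|\nabla u\|_\infty\le 1/2$, so that the map $\Phi_t(x):=x+u(t,x)$ is, for each $t$, a bi-Lipschitz homeomorphism of $\R^d$ with Lipschitz constants bounded uniformly in $t\in[0,T]$, and $\nabla^2 u\in L^q_{loc}(L^p)$. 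Setting $Y_t:=\Phi_t(X_t)$, the It\^o--Krylov formula shows that $Y_t$ solves an SDE
  \begin{equation*}
  \d Y_t=\tilde\sigma(t,Y_t)\,\d W_t+\tilde b(t,Y_t)\,\d t,\qquad Y_0=\Phi_0(x),
  \end{equation*}
whose new coefficients are $\tilde\sigma(t,y)=\big[(\Id+\nabla u)\,\sigma\big]\big(t,\Phi_t^{-1}(y)\big)$ and $\tilde b(t,y)=\lambda\,u\big(t,\Phi_t^{-1}(y)\big)$; crucially the singular drift has been absorbed, $\tilde b$ is bounded and Lipschitz in $y$ (locally uniformly in $t$), and $\tilde\sigma$ inherits from (H$^\sigma_1$) the non-degeneracy bounds with a possibly smaller ellipticity constant $\delta'$ comparable to $\delta$, while being uniformly continuous (but in general only H\"older/measurable, not Lipschitz) in $y$.

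Next I would establish the log-Harnack inequality for the semigroup $\tilde P_t$ of the transformed equation. Here one cannot directly quote Theorem \ref{sect-2-thm}, since $\tilde\sigma$ need not be semi-Lipschitz; instead I would run the coupling-by-change-of-measure argument directly. Given two starting points $y,y'$, construct the coupling $(Y_t,Y_t')$ with $Y_0=y$, $Y_0'=y'$ where $Y_t'$ solves the same equation driven by a shifted Brownian motion $\d\tilde W_t=\d W_t+\xi_t\,\d t$, choosing the drift-correction $\xi_t$ so that $Y_t'-Y_t$ is forced to $0$ at time $t$ — the standard choice being $\xi_t=\tilde\sigma(t,Y_t')^{-1}\big[\tilde b(t,Y_t')-\tilde b(t,Y_t)-\tfrac{Y_t'-Y_t}{t-s}\cdot(\text{correction})\big]$ or, more robustly, the additive-coupling choice that gives $|Y_t'-Y_t|^2$ decaying linearly. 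Using the non-degeneracy of $\tilde\sigma$ (lower bound $\delta'$), the boundedness of $\tilde\sigma$ (upper bound $K'$) and the Lipschitz continuity of $\tilde b$, Gr\"onwall's lemma controls $\int_0^t|\xi_r|^2\,\d r$ by $C'|y'-y|^2/(\delta'(t-s))$; then Girsanov's theorem together with the Young-type inequality $\mu(\log g)\le\log\nu(g)+ \mathrm{Ent}(\mu\|\nu)$ yields $\tilde P_t\log f(y')\le\log\tilde P_t f(y)+C'|y'-y|^2/(\delta'(t-s))$ for $f\ge1$.

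Finally I would transfer the inequality through $\Phi$. Since $X_t(x)=\Phi_t^{-1}(Y_t(\Phi_0(x)))$ one has $P_tf=\tilde P_t(f\circ\Phi_t^{-1})$, so applying the inequality for $\tilde P_t$ at the points $\Phi_0(x)$ and $\Phi_0(y)$ gives
  \begin{equation*}
  P_t\log f(y)\le\log P_t f(x)+\frac{C'\,|\Phi_0(y)-\Phi_0(x)|^2}{\delta'(t-s)}\le\log P_t f(x)+\frac{C\,|y-x|^2}{\delta(t-s)},
  \end{equation*}
where the last step uses the uniform Lipschitz bound on $\Phi_0$ and $\delta'\gtrsim\delta$, absorbing everything into a single constant $C=C(T,d,p,q,K,\delta,\|b\|)$; this is where the claimed clean form (no additive constant, explicit $\delta$ in the denominator) comes out, improving \cite[(2.3)]{Shao}. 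The main obstacle is the middle step: proving the log-Harnack inequality for $\tilde P_t$ when $\tilde\sigma$ is merely uniformly continuous and non-degenerate — the coupling construction and the a priori $L^2$ estimate on $\xi_t$ must be carried out carefully so that no regularity of $\tilde\sigma$ beyond boundedness and ellipticity is used (this is essentially why the log-Harnack, rather than the power-Harnack, inequality is the natural target here), and one must check that the Zvonkin transform indeed produces a $\tilde\sigma$ satisfying only these hypotheses, together with the well-posedness of the coupled system needed to justify Girsanov.
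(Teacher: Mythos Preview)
Your overall architecture—Zvonkin transform, log-Harnack for the transformed equation, transfer back through $\Phi$—matches the paper's, but the central step has a genuine gap. After the transform, the new diffusion coefficient $\tilde\sigma=(\Id+\nabla u)\,\sigma\circ\Phi^{-1}$ is \emph{not} Lipschitz: $\nabla\tilde\sigma$ involves $\nabla^2 u$ and $\nabla\sigma$, and under the LPS hypotheses both lie only in $L^q_tL^p_x$, not in $L^\infty$. You acknowledge this and propose to run the coupling-by-change-of-measure ``so that no regularity of $\tilde\sigma$ beyond boundedness and ellipticity is used,'' but this is precisely where the argument breaks down. In Wang's coupling the difference process carries the martingale term $(\tilde\sigma(t,Y_t')-\tilde\sigma(t,Y_t))\,\d W_t$, and forcing $Y_T'=Y_T$ while keeping $\int_0^T|\xi_s|^2\,\d s$ bounded by $C|y'-y|^2/t$ requires a pointwise semi-Lipschitz bound of type (A1); without it the Gr\"onwall step you invoke is unavailable. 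This is exactly the obstacle that left Shao's inequality \cite[(2.3)]{Shao} with extra constants, and your sketch does not indicate any new mechanism to circumvent it.

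The paper avoids coupling altogether. It first treats the \emph{driftless} SDE $\d Y_t=\Sigma(t,Y_t)\,\d W_t$ and proves, via Zhang's bound $\sup_{t,x}\E|\nabla Y_t(x)|^2<\infty$ (valid under (H$^\sigma_1$)--(H$^\sigma_2$)), the $L^2$-gradient estimate $|\nabla T_{s,t}f|^2\le C_1\,T_{s,t}|\nabla f|^2$. This commutation estimate, together with the ellipticity lower bound $\delta$, is fed into the Bakry--Gentil--Ledoux interpolation $u\mapsto T_{s,u}\log T_{u,t}f(\gamma_u)$ to obtain the log-Harnack inequality directly for $T_{s,t}$. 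Then the Zvonkin transform is taken in the form that \emph{removes the drift entirely} on a short interval $[s,t]$ with $t-s\le T_0$ (small $T_0$ replaces your large $\lambda$), so that $\Sigma$ again satisfies (H$^\sigma_1$)--(H$^\sigma_2$) and the driftless result applies; the restriction $t-s\le T_0$ is lifted at the end by iterating with the semigroup property and Jensen. The point is that the gradient-estimate route needs only $\nabla\Sigma\in L^q_tL^p_x$, which is exactly what survives the transform, whereas your coupling route needs Lipschitz $\tilde\sigma$, which does not.
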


Our method is based on the $L^2$-gradient estimate of the semigroup given in the proof of
\cite[Theorem 3.5]{Zhang11} (see also \eqref{gradient-estimate-1} of the current paper).
For the moment, we are unable to remove the extra constant in the Harnack inequality
\cite[(2.4)]{Shao}, since we do not have the $L^1$-gradient estimate.

\subsection{H\"older continuous drift}

Next we consider the case where the drift coefficient $b$ is H\"older continuous
with respect to the spatial variable. The motivation for considering this type of
drift comes from the papers \cite{FGP, Flandoli} of Flandoli, Gubinelli and Priola.
In the influential work \cite{FGP}, the authors considered the SDE \eqref{SDE}
with diffusion coefficient $\sigma= \Id$ and drift $b\in L_{loc}^\infty\big([0,\infty),
C_b^\theta(\R^d,\R^d)\big)$ for some $\theta\in(0,1)$.
They proved that, in this case, equation \eqref{SDE}
generates a stochastic flow of diffeomorphisms, from which they constructed an
explicit solution to the corresponding stochastic transport equation. Flandoli
et al.\ extended in \cite{Flandoli} the property of flow of diffeomorphisms to
more general SDEs with non-constant diffusion coefficient $\sigma$, satisfying
a uniform non-degeneracy. Their proof is based on a modified Zvonkin transformation
(called the It\^o--Tanaka trick in \cite{Flandoli}).
Notice that the main part of \cite{Flandoli} is focused
on the SDE \eqref{SDE} with coefficients independent on time, but the authors
mentioned in \cite[Remark 9]{Flandoli} that their method works as well in the
time-dependent case; moreover, they outlined the essential steps needed for transforming
the proofs to the time-dependent case.

Here are our assumptions in this case (see Subsection 3.1 for the definition of the
functional spaces):
\begin{itemize}
\item[(H3)] $b\in L_{loc}^\infty\big([0,\infty), C_b^\theta(\R^d,\R^d)\big)$ and $\sigma\in
L_{loc}^\infty\big([0,\infty), C_b^{1+\theta}(\R^d,\R^d\otimes\R^d)\big)$ for some constant
$\theta\in(0,1)$;
\item[(H4)] for any $(t,x)\in[0,\infty)\times\R^d$, the inverse of $a(t,x):=\sigma(t,x)
\sigma(t,x)^\ast$ exists, and
  $$\sup_{t\in [0,T],\, x\in\R^d}\|a^{-1}(t,x)\|_{HS}<\infty \quad\mbox{for all } T>0,$$
where $\|\cdot\|_{HS}$ is the Hilbert--Schmidt norm of matrices.
\end{itemize}

Under these conditions we shall prove

\begin{theorem}\label{thm-Harnack}
Assume the hypotheses {\rm (H3)} and {\rm (H4)}. Let $P_t$ be the semigroup associated
to the It\^o SDE \eqref{SDE}. Then, for any $T>0$, there are three positive constants $K$, $\kappa$ and
$\delta$ such that
\begin{itemize}
\item[\rm(1)] for any $0<t\le T$ and $f\in \B_b(\R^d)$ with $f\geq 1$, it holds
  \begin{equation}\label{log-Harnack-time}
  P_t\log f(y)\leq \log P_tf(x)+\frac{2K|x-y|^2}{\kappa^2(1-e^{-Kt})}
  \quad\mbox{for all } x,y\in\R^d;
  \end{equation}
\item[\rm(2)] for $p>(1+\delta/\kappa)^2$ and $\delta_p:=\max\{\delta,\kappa(\sqrt{p}-1)/2\}$, it holds
  \begin{equation}\label{Harnack-ineq-time}
  (P_t f(y))^p\leq (P_t f^p(x))\exp\bigg[\frac{K\sqrt{p}\,(\sqrt{p}-1)|x-y|^2}
  {\delta_p[(\sqrt{p}-1)\kappa-\delta_p](1-e^{-Kt})}\bigg]
  \end{equation}
for all $f\in\mathcal B^+_b(\R^d)$, $x,y\in\R^d$ and $0<t\le T$.
\end{itemize}
\end{theorem}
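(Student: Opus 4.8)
The plan is to reduce the H\"older-drift SDE \eqref{SDE} to an SDE with Lipschitz (indeed $C^1$) coefficients by means of the It\^o--Tanaka/Zvonkin transformation used in \cite{Flandoli}, and then invoke the Harnack inequalities of \cite{Wang11} (stated as Theorem \ref{sect-2-thm} below) for semi-Lipschitz coefficients. Concretely, for a fixed time horizon $T>0$ and a large parameter $\lambda>0$ I would consider the backward PDE
  \begin{equation*}
  \partial_t u + \frac12 \Tr\big(a(t,x)\nabla^2 u\big) + b(t,x)\cdot\nabla u - \lambda u = -b(t,x),
  \qquad u(T,\cdot)=0,
  \end{equation*}
componentwise in $\R^d$. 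By the Schauder-type estimates recalled in Subsection 3.1, for $\lambda$ large enough this has a unique solution $u\in L^\infty_{loc}\big([0,T],C_b^{2+\theta}\big)$ with $\|\nabla u\|_\infty\le \frac12$, so that the map $\Phi_t(x):=x+u(t,x)$ is, for each $t$, a $C^{1+\theta}$-diffeomorphism of $\R^d$ with $\tfrac12 \Id\le \nabla\Phi_t\le \tfrac32\Id$. Setting $Y_t:=\Phi_t(X_t)$, It\^o's formula together with the PDE kills the irregular drift and yields a new SDE
  \begin{equation*}
  \d Y_t = \tilde b(t,Y_t)\,\d t + \tilde\sigma(t,Y_t)\,\d W_t,
  \end{equation*}
where $\tilde b(t,y)=\lambda\,u\big(t,\Phi_t^{-1}(y)\big)$ and $\tilde\sigma(t,y)=\big[(\Id+\nabla u)\sigma\big]\big(t,\Phi_t^{-1}(y)\big)$; both are bounded and Lipschitz in $y$ locally uniformly in $t$, and $\tilde\sigma\tilde\sigma^\ast$ is uniformly non-degenerate thanks to (H4) and the bound on $\nabla\Phi$.

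Next I would apply Theorem \ref{sect-2-thm} to the transformed semigroup $\tilde P_t g(y):=\E g(Y_t(y))$. Because $\tilde b$ and $\tilde\sigma$ are semi-Lipschitz, that theorem provides constants $K_0,\kappa_0>0$ (the dissipativity constant and the ellipticity lower bound of $\tilde\sigma$) for which the log-Harnack inequality
  \begin{equation*}
  \tilde P_t\log g(y')\le \log \tilde P_t g(y') + \frac{C_0|y-y'|^2}{\kappa_0^2(1-e^{-K_0 t})}
  \end{equation*}
and the power-Harnack inequality hold for all $0<t\le T$. The final step is to transfer these back to $P_t$. Since $X_t(x)=\Phi_t^{-1}\big(Y_t(\Phi_0(x))\big)$ and, crucially, $\Phi_t$ is applied at the \emph{same} terminal time for both endpoints, for $f\ge1$ we have $P_t f(x)=\tilde P_t\big(f\circ\Phi_t^{-1}\big)(\Phi_0(x))$, and likewise for $\log f$; plugging $g=f\circ\Phi_t^{-1}$ into the transformed inequalities and using the bi-Lipschitz bound $\tfrac12|x-y|\le|\Phi_0(x)-\Phi_0(y)|\le\tfrac32|x-y|$ produces \eqref{log-Harnack-time} and \eqref{Harnack-ineq-time} after relabelling the constants ($K$ absorbs $K_0$, $\kappa$ the lower ellipticity bound, $\delta$ a bound controlling the Lipschitz constant of $\tilde b$ relative to the noise, which is where the restriction $p>(1+\delta/\kappa)^2$ enters via Theorem \ref{sect-2-thm}).

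The main obstacle I anticipate is \emph{not} the abstract strategy but making the time-dependent Zvonkin transformation fully rigorous: one must verify the Schauder estimates for the parabolic PDE with merely $L^\infty_{loc}$-in-time, $C^\theta$-in-space coefficients (so that the $t$-regularity of $u$ is only $L^\infty_{loc}$, not continuous), check that It\^o's formula genuinely applies to $u(t,\cdot)$ despite this limited time-regularity, and confirm that the constants $K_0,\kappa_0,\delta$ in Theorem \ref{sect-2-thm}, which a priori depend on $\sup\|\nabla\tilde b\|$ and the ellipticity of $\tilde\sigma$, can be chosen uniformly on $[0,T]$. A secondary technical point is that Theorem \ref{sect-2-thm} requires a \emph{global} semi-Lipschitz condition; since $\tilde b,\tilde\sigma$ and their spatial derivatives are globally bounded on $[0,T]\times\R^d$ here, this is satisfied, but one should record the dissipativity constant carefully, as it is exactly the $K$ appearing in the exponent $1-e^{-Kt}$. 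Finally, the hypothesis (H4) (not just invertibility of $a$ but a uniform bound on $\|a^{-1}\|_{HS}$) is what guarantees the uniform lower ellipticity bound $\kappa$ survives the transformation, so I would make sure each estimate is tracked back to that assumption.
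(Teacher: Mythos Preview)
Your proposal is correct and follows essentially the same route as the paper: Zvonkin/It\^o--Tanaka transform via the parabolic resolvent equation, verification that the transformed coefficients $\tilde b,\tilde\sigma$ satisfy Wang's conditions (A1)--(A3), application of Theorem~\ref{sect-2-thm} to $\tilde P_t$, and transfer back through the identity $P_tf=\tilde P_t(f\circ\Phi_t^{-1})\circ\Phi_0$ together with the bi-Lipschitz bound $|\Phi_0(x)-\Phi_0(y)|\le\tfrac32|x-y|$. One small correction: the constant $\delta$ entering the restriction $p>(1+\delta/\kappa)^2$ comes from condition (A3), which bounds $|(\tilde\sigma(t,x)-\tilde\sigma(t,y))(x-y)|$ and is controlled by $\|\tilde\sigma\|_\infty$---it has nothing to do with the Lipschitz constant of $\tilde b$.
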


Unfortunately, the explicit expressions of the constants $K,\kappa$ and $\delta$ are a
little complicated, as can be seen from the proof in Section 3.
To point out the main difference between the H\"{o}lder continuous
situation and the Lipschitz continuous setting, we first explain the
idea of coupling for \eqref{SDE} with semi-Lipschitz continuous
drift. For simplicity, we consider the time-independent case where $\sigma=\Id$
and for some $K\in\R$,
  $$\langle b(x)-b(y),x-y\rangle\le K|x-y|^2,\quad x,y\in\R^d.$$
For $x\neq y\in\R^d$ and $T>0$, let $X_t$ solve \eqref{SDE} with $X_0=x$, and $Y_t$ solve
  $$\d Y_t=\d B_t+b(Y_t)\,\d t+\frac{(X_t-Y_t)|x-y|e^{-Kt}}{|X_t-Y_t|\int_0^Te^{-2Ks}\,\d s}\,\d t,\quad Y_0=y.$$
Then, $Y_t$ is well defined up to the coupling time
  $$\tau=\inf\{t\ge0:X_t=Y_t\}.$$ Let $Y_t=X_t$ for $t\ge \tau.$
We have
  $$\d|X_t-Y_t|\le K|X_t-Y_t|\d t-\frac{|x-y|e^{-Kt}}{\int_0^Te^{-2Ks}\,\d s}\,\d t, \quad
  t\le \tau.$$
That is,
  $$\d(|X_t-Y_t|e^{-Kt})\le-\frac{|x-y|e^{-2Kt}}{\int_0^Te^{-2Ks}\,\d s}\,\d t
  , \quad t\le \tau.$$
This implies $\tau\le T$ and hence, $X_T=Y_T$. Combining it with the
Girsanov theorem yields the desired Harnack inequalities; see for instance
the proof of \cite[Theorem 2]{ArnaudonThalmaierWang2006} or that of
\cite[Theorem 1.1]{Wang11}. However, due to the poor H\"{o}lder
regularity of the drift vector field $b$, it seems that in the
present setting one cannot directly use the coupling method above to
establish the Harnack inequalities.

Now we briefly describe our strategy to help the readers understand better
the proof of Harnack inequalities with H\"{o}lder continuous drift.
Following the ideas in the proof of \cite[Theorem 7]{Flandoli}, we
can transform the equation \eqref{SDE} into a new SDE \eqref{conjugate-SDE} which has
smooth coefficients with bounded derivatives; moreover, there is a simple
relationship between their corresponding semigroups (see \eqref{relationship} below).
For this new equation \eqref{conjugate-SDE}, we can check that the assumptions (A1)--(A3)
in \cite{Wang11} are satisfied under our hypotheses (H3)--(H4). In
this way we first get Harnack inequalities for the semigroup
associated with the new equation \eqref{conjugate-SDE}, then the
relationship \eqref{relationship} between the semigroups allows us to
prove Theorem \ref{thm-Harnack}.

This paper is organized as follows. By making use of the $L^2$-gradient estimate in
\cite[p.1109]{Zhang11}, we establish in Section 2 the log-Harnack inequality
\eqref{Log-Harnack-1} by applying the semigroup interpolation scheme (see e.g. \cite{BGL}
for intensive studies on Markov Triples) and the
Zvonkin transformation. In Section 3, we first recall some necessary
results from the references \cite{Flandoli, Wang11}, then the main part is devoted to
check that the coefficients of the transformed SDE \eqref{conjugate-SDE} verify the hypotheses
(A1)--(A3) in \cite{Wang11}. With the key relation \eqref{relationship} in hand, it is easy to give
the proof of Theorem \ref{thm-Harnack}. Finally, by using explicit heat kernel estimates,
we establish in Section 4 the Harnack inequality for the SDE driven by $\alpha$-stable process.

\section{Log-Harnack inequality for SDE with LPS-type drift}

This section is devoted to the proof of Theorem \ref{thm-log-Harnack}. We shall first
prove the log-Harnack inequality \eqref{Log-Harnack-1} for the semigroup associated to the following
It\^o SDE without drift:
  \begin{equation}\label{SDE.1}
  \d Y_t=\sigma(t,Y_t)\,\d W_t,\quad Y_0=x,
  \end{equation}
where $\sigma$ verifies {\rm (H$^\sigma_1$)} and {\rm (H$^\sigma_2$)}. In the sequel, we denote
by $T_{s,t}$ the two-parameter semigroup associated to \eqref{SDE.1} defined by
  $$T_{s,t}f(x)=\E\big(f(Y_t)|Y_s=x\big),\quad 0\le s\le t.$$
For simplicity, we set $T_tf(x)=T_{0,t}f(x)$. For $t\geq 0$, define the time-dependent
second order differential operator associated with $Y_t$ as follows
  $$L_t f(x)=\frac12\Tr\big[a(t,x)\nabla ^2 f(x)\big]
  ,\quad f\in C_b^2(\R^d),$$
where $a(t,x)=\sigma(t,x)\sigma(t,x)^\ast$ and $\nabla ^2 f$ is the Hessian matrix of $f$.
Then we have the well-known Kolmogorov equations:
  $$\partial_s T_{s,t}f=-L_sT_{s,t}f,\quad \partial_tT_{s,t}f=T_{s,t}L_tf,$$
where $\partial_s=\frac{\partial}{\partial s}$. For $f,g\in C^2(\R^d)$, define
  \begin{align*}
  \Gamma(t)(f,g)&=\frac{1}{2}\{L_t(fg)-gL_tf-fL_tg\},
  \end{align*}
and set $\Gamma(t)(f)=\Gamma(t)(f,f)$ for short.
Then
  $$\Gamma(t)(f,g)=\frac12\big\<\sigma(t,x)^\ast\,\nabla f, \sigma(t,x)^\ast\,\nabla g\big\>.$$

Let $\{\rho_n\}_{n\geq1}$ be a family of mollifiers on $\R^d$ and set $\sigma^n(t,x)=
(\sigma(t,\cdot)\ast\rho_n)(x),\,n\geq 1$. We consider the following It\^o SDE with smooth coefficient:
  \begin{equation*}\label{SDE-smooth}
  \d Y^n_t=\sigma^n(t,Y^n_t)\,\d W_t,\quad Y^n_0=x.
  \end{equation*}
Following the arguments in the proof of \cite[Theorem 3.5]{Zhang11}, we can show that
  \begin{equation}\label{gradient-estimate}
  C_1:=\sup_{n\geq 1}\sup_{t\leq T}\sup_{x\in\R^d}\E\big(|\nabla Y^n_t(x)|^2\big)<+\infty,\quad T>0.
  \end{equation}
Then for any $f\in C_b^1(\R^d)$, $x,y\in\R^d$ and $t>0$, by the mean value formula,
  $$f(Y^n_t(x))-f(Y^n_t(y))=\int_0^1\big\<\nabla f\big[Y^n_t(y+r(x-y))\big],
  \big[\nabla Y^n_t(y+r(x-y))\big](x-y)\big\>\,\d r.$$
Therefore,
  \begin{align*}
  |f(Y^n_t(x))-f(Y^n_t(y))|&\leq |x-y|\int_0^1\big|\nabla f\big[Y^n_t(y+r(x-y))\big]\big|
  \cdot\big|\nabla Y^n_t(y+r(x-y))\big|\,\d r.
  \end{align*}
Cauchy's inequality and \eqref{gradient-estimate} imply that for any $f\in C_b^1(\R^d)$, $x,y\in\R^d$ and $0<t\le T,$
  \begin{equation}\label{rrr}
  \big|\E f(Y^n_t(x))-\E f(Y^n_t(y))\big|\leq \sqrt{C_1}\,|x-y|\int_0^1\Big(\E\big|\nabla f\big[Y^n_t(y+r(x-y))\big]\big|^2\Big)^{1/2}\,\d r.
  \end{equation}
Moreover, by \cite[(3.7)]{Zhang11}, we have $$\lim_{n\to\infty}\E|Y^n_t(x)-Y_t(x)|=0.$$ Thus,
by the dominated convergence theorem, letting $n$ tend to $\infty$ in \eqref{rrr}
yields that for any $f\in C_b^1(\R^d)$, $x,y\in\R^d$ and $0<t\le T,$
  $$\big|\E f(Y_t(x))-\E f(Y_t(y))\big|\leq \sqrt{C_1}\,|x-y|\int_0^1\Big(\E\big|\nabla f\big[Y_t(y+r(x-y))\big]\big|^2\Big)^{1/2}\,\d r.$$
Now we let $y\to x$ and obtain
  \begin{equation*}
  |\nabla T_tf(x)|^2\leq C_1T_t|\nabla f|^2(x),\quad x\in\R^d, 0<t\le T.
  \end{equation*}
Similarly, we have for all $0\le s\leq t\le T$,
  \begin{equation}\label{gradient-estimate-1}
  |\nabla T_{s,t}f(x)|^2\leq C_1T_{s,t}|\nabla f|^2(x),\quad x\in\R^d.
  \end{equation}

Now standard arguments lead to the log-Harnack inequality for the semigroup $T_{s,t}$.

\begin{proposition}\label{3-prop-1}
Assume that $\sigma$ verifies {\rm (H$^\sigma_1$)} and {\rm (H$^\sigma_2$)}. Then for any $T>0$,
there is a constant $C_1>0$ such that for all $f\in \B_b(\R^d)$ with $f\geq 1$,
  \begin{equation}\label{Log-Harnack}
  T_{s,t}\log f(y)\leq \log T_{s,t}f(x)+\frac{C_1|y-x|^2}{2\delta(t-s)},
  \quad x,y\in \R^d,\,s\leq t,
  \end{equation}
where $\delta$ is the constant in {\rm (H$^\sigma_1$)}.
\end{proposition}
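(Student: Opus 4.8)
The plan is to deduce \eqref{Log-Harnack} from the $L^2$-gradient estimate \eqref{gradient-estimate-1} by the classical semigroup interpolation, using the non-degeneracy in (H$^\sigma_1$) to convert $|\nabla\cdot|^2$ into the carr\'e du champ $\Gamma(\tau)(\cdot)$.

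First I would reduce to smooth data, working with the mollified diffusion $\sigma^n$ and the associated semigroup $T^n_{s,\tau}$, for which the estimate $|\nabla T^n_{s,\tau}f|^2\le C_1\,T^n_{s,\tau}|\nabla f|^2$ holds with the \emph{same} constant $C_1$ from \eqref{gradient-estimate} (one simply lets $y\to x$ in \eqref{rrr} before passing to the limit in $n$), while by (H$^\sigma_1$) and the uniform convergence $\sigma^n\to\sigma$ on $[0,T]\times\R^d$ the non-degeneracy constants of $\sigma^n$ satisfy $\delta_n\to\delta$. Fix $0\le s\le t\le T$, $x,y\in\R^d$ and $f\in C_b^2(\R^d)$ with $f\ge1$; then $h_\tau:=T^n_{\tau,t}f$ obeys $1\le h_\tau\le\|f\|_\infty$, is a classical $C^{1,2}$ solution of $\partial_\tau h_\tau=-L^n_\tau h_\tau$ and has bounded gradient. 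With $u_r=s+r(t-s)$ and $\gamma_r=x+r(y-x)$ for $r\in[0,1]$, define
$$\phi(r)=T^n_{s,u_r}\big(\log h_{u_r}\big)(\gamma_r),$$
so that $\phi(0)=\log T^n_{s,t}f(x)$ and $\phi(1)=T^n_{s,t}\log f(y)$; the goal is to bound $\phi(1)-\phi(0)$ from above.

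Differentiating $\phi$ and using the Kolmogorov equations together with the identity $L^n_\tau\log h=\tfrac{L^n_\tau h}{h}-\tfrac{\Gamma_n(\tau)(h)}{h^2}$ (with $\Gamma_n(\tau)$ the carr\'e du champ of $\sigma^n$), the two terms involving $L^n_{u_r}h_{u_r}/h_{u_r}$ cancel and one is left with
$$\phi'(r)=-(t-s)\,T^n_{s,u_r}\!\Big(\tfrac{\Gamma_n(u_r)(h_{u_r})}{h_{u_r}^2}\Big)(\gamma_r)+\big\langle\nabla\big(T^n_{s,u_r}\log h_{u_r}\big)(\gamma_r),\,y-x\big\rangle.$$
Writing $A(r):=T^n_{s,u_r}\big(\Gamma_n(u_r)(h_{u_r})/h_{u_r}^2\big)(\gamma_r)\ge0$, the gradient estimate and the bound $|\nabla g|^2\le\tfrac{2}{\delta_n}\Gamma_n(u_r)(g)$ give $\big|\nabla(T^n_{s,u_r}\log h_{u_r})(\gamma_r)\big|^2\le C_1\,T^n_{s,u_r}(|\nabla\log h_{u_r}|^2)(\gamma_r)\le\tfrac{2C_1}{\delta_n}A(r)$, and hence by Cauchy's inequality
$$\phi'(r)\le-(t-s)A(r)+\sqrt{\tfrac{2C_1}{\delta_n}}\,|y-x|\,\sqrt{A(r)}\le\max_{a\ge0}\Big(-(t-s)a+\sqrt{\tfrac{2C_1}{\delta_n}}\,|y-x|\sqrt a\Big)=\frac{C_1|y-x|^2}{2\delta_n(t-s)}.$$
Integrating over $r\in[0,1]$ gives $\phi(1)-\phi(0)\le C_1|y-x|^2/(2\delta_n(t-s))$, that is, the log-Harnack inequality for $T^n_{s,t}$.

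Finally I would remove the regularization: letting $n\to\infty$ and using $\E|Y^n_\tau(x)-Y_\tau(x)|\to0$ (so $T^n_{s,\tau}g\to T_{s,\tau}g$ pointwise for bounded continuous $g$) and $\delta_n\to\delta$ yields \eqref{Log-Harnack} for all $f\in C_b^2(\R^d)$ with $f\ge1$, after which a routine monotone-class/approximation argument --- both sides being monotone under bounded increasing limits of $f$ --- extends it to every $f\in\B_b(\R^d)$ with $f\ge1$. The computation itself is elementary once the interpolation is set up with the correct orientation; the only points needing care are the justification of differentiating under $T^n$ (which is exactly why one first passes to smooth coefficients, so that the $h_\tau$ are classical solutions of the Kolmogorov equations) and the bookkeeping of the constants $\delta_n$, both of which are standard under (H$^\sigma_1$)--(H$^\sigma_2$) and the uniform bound \eqref{gradient-estimate}.
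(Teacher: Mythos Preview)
Your argument is correct and follows essentially the same semigroup interpolation as the paper: differentiate $u\mapsto T_{s,u}\log T_{u,t}f(\gamma_u)$ along the straight segment, cancel the generator terms via $L\log h=\tfrac{Lh}{h}-\tfrac{\Gamma(h)}{h^2}$, and combine the $L^2$-gradient estimate with the lower bound in (H$^\sigma_1$) to absorb the gradient of the interpolation via Young's inequality. The only procedural difference is that you keep the mollification $\sigma^n$ throughout and pass to the limit at the end (which cleanly justifies the differentiations), whereas the paper first passes to the limit to obtain \eqref{gradient-estimate-1} for the original semigroup and then performs the interpolation directly, invoking It\^o's formula to derive the identity \eqref{ac} and hence the absolute continuity in $u$.
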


\begin{proof}
Take $f\geq 1$. Applying It\^{o}'s formula, we have
  \begin{eqnarray*}
  \d \log T_{u,t}f(Y_{u})&=&\< \nabla\log T_{u,t}f(Y_{u}), \sigma(u,Y_{u})\,\d W_u \>
  +L_u \log T_{u,t}f(Y_{u})\,\d u -\frac{L_u T_{u,t}f(Y_{u})}{T_{u,t}f(Y_{u})}\,\d u\\
  &=&\< \nabla\log T_{u,t}f(Y_{u}), \sigma(u,Y_{u})\,\d W_u \>
  - \frac{\Gamma(u)(T_{u,t}f)(Y_{u})}{(T_{u,t}f)^2(Y_u)}\,\d u,
  \end{eqnarray*}
where the last equality follows by
  $$L_u\log T_{u,t}f=\frac{L_u T_{u,t}f}{T_{u,t}f}-\frac{\Gamma(u)(T_{u,t}f)}{(T_{u,t}f)^2}.$$
Then, by integrating from $s$ to $u$, we get
  \begin{equation*}
  \begin{split}
  \log T_{u, t}f(Y_{u}) - \log T_{s,t}f(Y_s)
  &=\int_s^u\< \nabla\log T_{r,t}f(Y_{r}), \sigma(r,Y_{r})\,\d W_r \>  -\int^{u}_s\frac{\Gamma(r)(T_{r,t}f)(Y_{r})}{(T_{r,t}f)^2(Y_r)}\,\d r.
  \end{split}
  \end{equation*}
Taking expectation with respect to $\{Y_s=x\}$, we have
  \begin{eqnarray}\label{ac}
  T_{s,u}\log T_{u, t}f(x) - \log T_{s,t}f(x)
  =-\int^{u}_s T_{s,r}\bigg(\frac{\Gamma(r)(T_{r,t}f)}{(T_{r,t}f)^2}\bigg)(x)\,\d r,\quad u\in [s,t].
  \end{eqnarray}

Now for $x,y\in \R^d$, let $\gamma_u=(y-x)\frac{u-s}{t-s}+x$. The identity \eqref{ac} implies that
$[s,t]\ni u\mapsto T_{s,u}\log T_{u,t}f(\gamma_u)$ is absolutely continuous; thus by (H$^\sigma_1$)
and the definition of $\Gamma(u)$, for any $0\le s\le t\le T$, we have
  \begin{eqnarray*}
  \frac{\d}{\d u}T_{s,u}\log T_{u,t}f(\gamma_u)
  &=& -T_{s,u}\bigg(\frac{\Gamma(u)(T_{u,t}f)}{(T_{u,t}f)^2}\bigg)(\gamma_u)
  +\big\<\nabla \big(T_{s,u}\log T_{u,t}f\big)(\gamma_u),\dot{\gamma}_u\big\>\\
  &\leq& -\frac{\delta}{2} T_{s,u}|\nabla \log T_{u,t}f|^2(\gamma_u)
  +|\dot{\gamma}_u|\big(C_1T_{s,u}|\nabla \log T_{u,t}f|^2(\gamma_u)\big)^{\frac12}\\
  &\leq& \frac{C_1}{2\delta}|\dot{\gamma}_u|^2,\quad \mathcal{L}^1\mbox{-a.e. } u\in [s,t] ,
  \end{eqnarray*}
where in the first inequality we have used \eqref{gradient-estimate-1}.
Integrating from $s$ to $t$ gives us the log-Harnack inequality \eqref{Log-Harnack}.
\end{proof}

It remains to transfer the above result to the general It\^o SDE \eqref{SDE} with drift.
Before moving on, we introduce two function spaces: for $p,q\geq 1$ and $s<t$, let
  $$L_p^q(s,t)=L^q\big([s,t],L^p(\R^d)\big)\quad \mbox{and}\quad
  \mathbb H_{2,p}^q(s,t)=L^q\big([s,t],W^{2,p}(\R^d)\big), $$
where $W^{2,p}(\R^d)$ is the  standard Sobolev space.

We shall need the following preparations which are taken from \cite[pp.1110--1111]{Zhang11}.
Assume that $\sigma$ satisfies {\rm (H$^\sigma_1$)} and $b\in L_p^q(0,T)$
with $p,q$ verifying \eqref{condition} for any $T>0$. Fix $0<T_0\le T$. For any $0\leq s< t\leq T$
with $t-s\leq T_0$, let $(u(r,x))_{s\leq r\leq t}$ with $u(r,x):=\big(u^1(r,x),\ldots, u^d(r,x)\big)$
be the solution to the backward parabolic equation
  \begin{equation*}
  \partial_r u^i(r,x)+L_r u^i(r,x)+b^i(r,x)=0,\quad u^i(t,x)=0,\quad 1\leq i\leq d,
  \end{equation*} where $$L_r u^i(r,x)=\frac12\Tr\big[a(r,x)\nabla ^2 u^i(r,x)\big]+ \langle b(r,x), \nabla u^i(r,x)\rangle
  ,$$ $a(r,x)=\sigma(r,x)\sigma(r,x)^\ast$ and $b(r,x):=\big(b^1(r,x),\ldots, b^d(r,x)\big)$.
Then by \cite[Theorem 5.1]{Zhang11} (see also \cite[Theorem 10.3 and Remark 10.4]{KR05}), one has
  \begin{equation}\label{PDE.0}
  C_2:=\sup_{s\in[0\vee(t-T_0),t]}\Big(\|\partial_r u\|_{L_p^q(s,t)}+\|u\|_{\mathbb H_{2,p}^q(s,t)}\Big)<+\infty.
  \end{equation}
It follows from \cite[Lemma 10.2]{KR05} that the function $(r,x)\mapsto \nabla u(r,x)$ is
H\"older continuous and for fixed $\delta\in \big(0,\frac12-\frac d{2p}-\frac1q\big)$,
there exists a constant $C_3>0$ depending on $p,q,\delta$ and $T$ such that
  \begin{equation}\label{PDE.1}
  \sup_{(r,x)\in[s,t]\times\R^d}|\nabla u(r,x)|\leq C_3T_0^\delta.
  \end{equation}
Define $\Phi_r(x)=x+u(r,x),\,(r,x)\in [s,t]\times\R^d$. It is easy to see that
  \begin{equation}\label{PDE}
  \partial_r\Phi_r(x)+L_r\Phi_r(x)=0,\quad \Phi_t(x)=x.
  \end{equation}
Moreover, if $T_0$ is small enough, we deduce from \eqref{PDE.1} that for all $r\in[s,t]$,
  \begin{equation}\label{diffeomorphism}
  \frac12 |x-y|\leq |\Phi_r(x)-\Phi_r(y)|\leq \frac32|x-y|\quad \mbox{for all } x,y\in\R^d.
  \end{equation}
Therefore $\Phi_r$ is a diffeomorphism on $\R^d$. The following result is
proved in \cite[Lemma 4.3]{Zhang11}.

\begin{lemma}[Zvonkin transformation]\label{Zvonkin}
Let $X_r$ be an $\R^d$-valued $(\F_r)_{r\geq0}$-adapted continuous process satisfying
  $$\P\bigg\{\omega\in \Omega:\int_s^t\big(|b(r,X_r(\omega))|+|\sigma(r,X_r(\omega))|^2\big)
  \,\d r<+\infty\bigg\}=1.$$
Then $X_r$ solves the equation \eqref{SDE} on the time interval $[s,t]$ if and only if
$Y_r=\Phi_r(X_r)$ solves the following SDE on $[s,t]$:
  \begin{equation}\label{Zvonkin.1}
  \d Y_r=\Sigma(r,Y_r)\,\d W_r,
  \end{equation}
where $\Sigma(r,y)=(\nabla\Phi_r\cdot\sigma(r,\cdot))\circ \Phi^{-1}_r(y)$.
\end{lemma}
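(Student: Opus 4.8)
The plan is to apply the It\^o formula to the vector-valued function $(r,x)\mapsto\Phi_r(x)=x+u(r,x)$ along the process $X_r$, using the PDE \eqref{PDE} satisfied by $\Phi$ to cancel the drift terms, and then to invert this change of variables using that $\Phi_r$ is a diffeomorphism by \eqref{diffeomorphism}. First I would check the regularity needed to justify the It\^o formula: from \eqref{PDE.0} the function $u$ lies in $\mathbb H_{2,p}^q(s,t)$ with $\partial_r u\in L_p^q(s,t)$, and by \eqref{PDE.1} together with \cite[Lemma 10.2]{KR05} the gradient $\nabla u$ is H\"older continuous and bounded; since $p>d$ and $q>2$ this is exactly the Krylov--R\"ockner regime where the generalized It\^o formula of \cite[Lemma 3.6]{KR05} (an It\^o--Krylov formula for Sobolev functions of a diffusion) applies, provided the integrability hypothesis $\int_s^t(|b(r,X_r)|+|\sigma(r,X_r)|^2)\,\d r<\infty$ holds almost surely — which is precisely the standing assumption on $X_r$ in the statement.

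Assuming $X_r$ solves \eqref{SDE} on $[s,t]$, applying the It\^o--Krylov formula to each component $\Phi_r^i(X_r)=X_r^i+u^i(r,X_r)$ gives
  \begin{equation*}
  \d\Phi_r^i(X_r)=\big(\partial_r u^i+L_r u^i+b^i\big)(r,X_r)\,\d r
  +\big\langle(\nabla\Phi_r^i)(r,X_r),\sigma(r,X_r)\,\d W_r\big\rangle,
  \end{equation*}
where the $\d r$-term is understood in the generalized (distributional) sense and $L_r$ is the full generator of \eqref{SDE} including the first-order part $\langle b,\nabla\cdot\rangle$. By the very definition of $u$ as the solution of the backward equation $\partial_r u^i+L_r u^i+b^i=0$, the drift term vanishes $\mathcal L^1$-a.e., so setting $Y_r=\Phi_r(X_r)$ we obtain $\d Y_r=(\nabla\Phi_r)(r,X_r)\,\sigma(r,X_r)\,\d W_r$. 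Since $\Phi_r$ is a diffeomorphism by \eqref{diffeomorphism}, we may write $X_r=\Phi_r^{-1}(Y_r)$, and substituting gives $\d Y_r=\Sigma(r,Y_r)\,\d W_r$ with $\Sigma(r,y)=(\nabla\Phi_r\cdot\sigma(r,\cdot))\circ\Phi_r^{-1}(y)$, which is \eqref{Zvonkin.1}. For the converse, if $Y_r$ solves \eqref{Zvonkin.1} then $X_r:=\Phi_r^{-1}(Y_r)$ is a well-defined adapted continuous process, and one applies the It\^o--Krylov formula to the inverse map $(r,y)\mapsto\Phi_r^{-1}(y)$; here one uses that $\Phi_r^{-1}$ inherits enough Sobolev/H\"older regularity from $\Phi_r$ (which again follows from \eqref{PDE.1}), together with the identity $\partial_r\Phi_r^{-1}+\widetilde L_r\Phi_r^{-1}=\cdots$ obtained by differentiating $\Phi_r(\Phi_r^{-1}(y))=y$ in $r$ and using \eqref{PDE}, to recover that $X_r$ satisfies \eqref{SDE}.

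The main obstacle is justifying the generalized It\^o formula rigorously: $\Phi_r$ is \emph{not} $C^2$ in $x$ (only $\nabla\Phi_r$ is H\"older) nor $C^1$ in $r$, so one cannot invoke the classical It\^o formula and must instead rely on the Krylov--R\"ockner version for $W^{2,p}$-in-space, $W^{1,q}$-in-time functions composed with the solution of an SDE with LPS drift — this is the technical heart, and it is where the full strength of \eqref{PDE.0}, \eqref{PDE.1} and the integrability assumption on $X_r$ is consumed. A secondary point requiring care is the regularity of $\Phi_r^{-1}$ needed for the converse direction; one shows $\nabla\Phi_r^{-1}=(\nabla\Phi_r)^{-1}\circ\Phi_r^{-1}$ is bounded and H\"older by \eqref{diffeomorphism} and the chain rule, and that second-order generalized derivatives of $\Phi_r^{-1}$ are controlled in the same $L_p^q$-scale, so that the It\^o--Krylov formula applies on that side as well. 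Since all of these facts are established in \cite{Zhang11}, the proof reduces to assembling them; I would simply cite \cite[Lemma 4.3]{Zhang11} and \cite[Lemma 3.6]{KR05} at the appropriate places.
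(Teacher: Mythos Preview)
Your proposal is correct and in fact more detailed than what the paper does: the paper does not give its own proof of this lemma but simply cites \cite[Lemma 4.3]{Zhang11}, as you anticipated in your final sentence. Your sketch of the underlying argument --- apply the It\^o--Krylov formula for $W^{2,p}$-in-space, $W^{1,q}$-in-time functions to $\Phi_r(X_r)$, use the backward PDE to kill the drift, and invert via the diffeomorphism property \eqref{diffeomorphism} --- is exactly the content of Zhang's proof, so there is nothing to compare beyond the fact that you have spelled out what the paper leaves to the reference.
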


With Proposition \ref{3-prop-1} and Lemma \ref{Zvonkin} in mind,
we can now present

\begin{proof}[Proof of Theorem \ref{thm-log-Harnack}]
We first check that the matrix valued function $\Sigma$ given in Lemma \ref{Zvonkin} satisfies
{\rm (H$^\sigma_1$)} and {\rm (H$^\sigma_2$)} with $\sigma$ replaced by $\Sigma$.
To this end, we fix some $0\leq s< t\leq T$ with $t-s\leq T_0$. By \eqref{diffeomorphism},
  $$\frac12\leq |\nabla\Phi_r(x)| \leq \frac32\quad \mbox{for all }(r,x)\in[s,t]\times\R^d.$$
From the definition of $\Sigma$ and {\rm (H$^\sigma_1$)}, we deduce that
  $$\frac14 \delta|y|^2\leq |\Sigma(r,x)^\ast y|^2\leq \frac94 K|y|^2\quad \mbox{for all }
  (r,x)\in[s,t]\times\R^d \mbox{ and } y\in\R^d.$$
Thus {\rm (H$^\Sigma_1$)} holds with new constants $\frac14\delta$ and $\frac94 K$. For the second
condition {\rm (H$^\Sigma_2$)}, we note that
  $$\partial_l\Sigma^{ik}(r,y)=\big[\big(\partial_{l'}\partial_j\Phi_r^i\cdot\sigma^{jk}(r,\cdot)
  +\partial_j\Phi_r^i\cdot\partial_{l'}\sigma^{jk}(r,\cdot)\big)\circ\Phi_r^{-1}(y)\big]\cdot \partial_l\Phi_r^{-1,l'}(y).$$
By \eqref{PDE.0}, \eqref{diffeomorphism} and {\rm (H$^\sigma_2$)}, we conclude that
$\|\partial_l\Sigma^{ik}\|_{L_p^q(s,t)}<+\infty$. That is, {\rm (H$^\Sigma_2$)} also holds
on the small interval $[s,t]$.

Denote by $\tilde T_{s,t}$ the semigroup associated to the new SDE \eqref{Zvonkin.1} without drift.
We can apply Proposition \ref{3-prop-1} to obtain that, for any $f\in \B_b(\R^d)$ with $f\geq 1$
and any $0\leq s<t\leq T$ with $t-s\leq T_0$, it holds
  \begin{equation}\label{3-thm-1.1}
  \tilde T_{s,t}\log f(y)\leq \log \tilde T_{s,t}f(x)+\frac{\tilde C_1|y-x|^2}{\delta(t-s)},
  \quad x,y \in\R^d,
  \end{equation}
where $\tilde C_1>0$ is some constant. We have to transfer the above
log-Harnack inequality \eqref{3-thm-1.1} to the semigroup $P_{s,t}$ associated to
\eqref{SDE}. This process is summarized in the next result.

\begin{lemma}
For any $f\in \B_b(\R^d)$ with $f\geq 1$ and any $0\leq s<t\leq T$ with $t-s\leq T_0$,
it holds
  \begin{equation}\label{3-thm-1.2}
  P_{s,t}\log f(y)\leq \log P_{s,t}f(x)+\frac{\tilde C_1|y-x|^2}{\delta(t-s)}
  \quad \mbox{for all }x,y\in\R^d.
  \end{equation}
\end{lemma}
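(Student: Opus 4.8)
Fix $0\le s<t\le T$ with $t-s\le T_0$ and let $(\Phi_r)_{r\in[s,t]}$ be the diffeomorphisms built above; recall from \eqref{PDE} the terminal normalization $\Phi_t=\Id$. The whole argument rests on a single intertwining identity between the semigroup $P_{s,t}$ of \eqref{SDE} and the semigroup $\tilde T_{s,t}$ of the driftless equation \eqref{Zvonkin.1}. Let $X_r$ solve \eqref{SDE} on $[s,t]$ with $X_s=z$; since $X$ has continuous paths and, by boundedness of $\sigma$ together with $b\in L_p^q(0,T)$, satisfies the integrability requirement of Lemma~\ref{Zvonkin}, the process $Y_r:=\Phi_r(X_r)$ solves \eqref{Zvonkin.1} on $[s,t]$, and conversely (both equations being well posed by \cite{Zhang11}, $\Phi_r$ being a diffeomorphism). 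Because $\Phi_t=\Id$ we have $Y_t=X_t$, while $Y_s=\Phi_s(z)$; hence for every $g\in\B_b(\R^d)$ and every $z\in\R^d$,
\begin{equation*}
P_{s,t}g(z)=\E\big(g(X_t)\,\big|\,X_s=z\big)=\E\big(g(Y_t)\,\big|\,Y_s=\Phi_s(z)\big)=\tilde T_{s,t}g\big(\Phi_s(z)\big).
\end{equation*}

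Granting this identity, the conclusion follows by substitution. Take $f\in\B_b(\R^d)$ with $f\ge1$, so that $\log f$ is bounded measurable. Since \eqref{3-thm-1.1} holds at every point of $\R^d$, we may evaluate it at $\Phi_s(x)$ and $\Phi_s(y)$; applying the intertwining identity with $g=\log f$ on the left and with $g=f$ inside the first term on the right, we obtain
\begin{equation*}
P_{s,t}\log f(y)=\tilde T_{s,t}\log f\big(\Phi_s(y)\big)\le\log\tilde T_{s,t}f\big(\Phi_s(x)\big)+\frac{\tilde C_1\,|\Phi_s(y)-\Phi_s(x)|^2}{\delta(t-s)}=\log P_{s,t}f(x)+\frac{\tilde C_1\,|\Phi_s(y)-\Phi_s(x)|^2}{\delta(t-s)}.
\end{equation*}
By the right-hand bound in \eqref{diffeomorphism} one has $|\Phi_s(y)-\Phi_s(x)|^2\le\frac94|x-y|^2$, and absorbing the factor $\frac94$ into the constant (still denoted $\tilde C_1$) yields \eqref{3-thm-1.2}.

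The only step requiring any care is thus the intertwining identity: one must invoke Lemma~\ref{Zvonkin} for the solution of \eqref{SDE} (the needed a priori integrability being part of the well-posedness theory recalled from \cite{Zhang11}) and use $\Phi_t=\Id$, which forces $Y_t=X_t$ and is precisely what makes the two semigroups differ only through the deterministic relabelling $z\mapsto\Phi_s(z)$ of the initial datum. Everything after that is substitution together with the bi-Lipschitz estimate \eqref{diffeomorphism}; note in particular that there is no loss in restricting the two points to the range of $\Phi_s$, since \eqref{3-thm-1.1} is valid on all of $\R^d$. Finally, to pass from this local statement to \eqref{Log-Harnack-1} for arbitrary $0<t\le T$, one chains once: if $t>T_0$, write $P_t\log f(y)=P_{0,T_0}\big(P_{T_0,t}\log f\big)(y)\le P_{0,T_0}\big(\log P_{T_0,t}f\big)(y)$ by Jensen's inequality, apply \eqref{3-thm-1.2} on $[0,T_0]$ to the bounded function $g:=P_{T_0,t}f\ge1$, and use $\log P_{0,T_0}g(x)=\log P_t f(x)$; this gives \eqref{Log-Harnack-1} with $C=\tilde C_1\max\{1,T/T_0\}$.
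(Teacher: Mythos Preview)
Your proof is correct and follows essentially the same approach as the paper: establish the intertwining identity $P_{s,t}g(z)=\tilde T_{s,t}g(\Phi_s(z))$ via Lemma~\ref{Zvonkin} and the terminal condition $\Phi_t=\Id$, then substitute into \eqref{3-thm-1.1} and absorb the bi-Lipschitz factor from \eqref{diffeomorphism} into $\tilde C_1$ (the paper does this absorption silently). Your final paragraph on extending to $t>T_0$ actually lies outside the scope of this lemma---the paper carries out that extension separately afterwards, iterating in steps of length $T_0$ rather than chaining once with Jensen---but the extra material is harmless and your constant $C=\tilde C_1\max\{1,T/T_0\}$ is correct.
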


\begin{proof}
Fix $0\leq s<t\leq T$ with $t-s\leq T_0$. For $g\in \B_b(\R^d)$,
by the definition of the semigroup $P_{s,t}$ and Lemma \ref{Zvonkin}, we have
  \begin{align*}
  P_{s,t}\, g(x)&=\E\big(g(X_t)|X_s=x\big)=\E\big(g\big(\Phi_t^{-1}(Y_t)\big)|\Phi_s^{-1}(Y_s)=x\big)\\
  &=\E\big(g(Y_t)|Y_s=\Phi_s(x)\big)=\tilde T_{s,t}\,g\big(\Phi_s(x)\big),
  \end{align*}
where the third equality follows from \eqref{PDE}. Therefore, for $f\in \B_b(\R^d)$ with
$f\geq 1$, by \eqref{3-thm-1.1},
  \begin{align*}
  P_{s,t}\log f(y)=\tilde T_{s,t}\log f\big(\Phi_s(y)\big)
  &\leq \log \tilde T_{s,t} f\big(\Phi_s(x)\big)+\frac{\tilde C_1|y-x|^2}{\delta(t-s)}\\
  &=\log P_{s,t}f(x)+\frac{\tilde C_1|y-x|^2}{\delta(t-s)},
  \end{align*}
which is the desired inequality.
\end{proof}

We continue the proof of Theorem \ref{thm-log-Harnack}. It remains to extend the
above result to the case where $t-s>T_0$, which follows from the semigroup property.
If $t-s\in(T_0,2T_0]$, then by the semigroup property and Jensen's inequality, we have
  \begin{align*}
  P_{s,t}\log f(y)&=P_{s+T_0,t}\big(P_{s,s+T_0}\log f\big)(y)
  \leq P_{s+T_0,t}\big[\log\big(P_{s,s+T_0} f\big)\big](y)\\
  &\leq \log\big[P_{s+T_0,t}\big(P_{s,s+T_0} f\big)\big](x)+\frac{\tilde C_1|y-x|^2}{\delta(t-s)}\\
  &=\log\big(P_{s,t} f\big)(x)+\frac{\tilde C_1|y-x|^2}{\delta(t-s)},
  \end{align*}
where in the second inequality we have used \eqref{3-thm-1.2}.
We complete the proof by repeating this procedure.
\end{proof}

\section{Harnack inequalities for SDEs with H\"older continuous drift}

In this section, we present the proof of Theorem \ref{thm-Harnack}. In the first subsection, we
give some notations of spaces of spatially H\"older continuous functions and preliminary results, then
we shall prove Theorem \ref{thm-Harnack} in the second subsection.

\subsection{Notations and preliminary results}

We adopt the notations in \cite[p.7]{FGP}. Let $T>0$ and $\theta\in(0,1)$ be fixed.
Define the space $L^\infty\big([0,T],C_b^\theta(\R^d)\big)$ as the set of all
bounded Borel functions $f:[0,T]\times\R^d\to \R$ for which
  \begin{equation}\label{norm}
  [f]_{\theta,T}=\sup_{t\in[0,T]}\sup_{x\neq y\in\R^d}\frac{|f(t,x)-f(t,y)|}{|x-y|^\theta}<+\infty.
  \end{equation}
This is a Banach space with respect to the usual norm $\|f\|_{\theta,T}=\|f\|_{0,T}+[f]_{\theta,T}$
where $\|f\|_{0,T}=\sup_{(t,x)\in[0,T]\times\R^d}|f(t,x)|$ is the supremum norm.
If $f$ is vector-valued or matrix-valued, then we simply replace the absolute
value in the numerator of \eqref{norm} and in the definition of $\|f\|_{0,T}$
by the Euclidean norm or Hilbert--Schmidt norm, which gives us the
spaces $L^\infty\big([0,T],C_b^\theta(\R^d, \R^d)\big)$ and
$L^\infty\big([0,T],C_b^\theta(\R^d,\R^d\otimes\R^d)\big)$ respectively.

Moreover, for $n\geq1$, $f\in L^\infty\big([0,T],C_b^{n+\theta}(\R^d)\big)$ if all
spatial partial derivatives $\nabla_{i_1}\ldots \nabla_{i_k}f\in L^\infty\big([0,T],C_b^\theta(\R^d)\big)$
for all $k=0,1,\ldots,n$, where $\nabla_j=\frac{\partial}{\partial x_j}$. The
corresponding norm is defined as
  $$\|f\|_{n+\theta,T}=\|f\|_{0,T}+\sum_{k=1}^n\|\nabla^k f\|_{0,T}+ [\nabla^n f]_{\theta,T},$$
in which we have extended the previous notations $\|\cdot\|_{0,T}$ and $[\,\cdot\,]_{\theta,T}$
to tensors. In the same way, we can define the spaces $L^\infty\big([0,T],C_b^{n+\theta}(\R^d,\R^d)\big)$ and $L^\infty\big([0,T],C_b^{n+\theta}(\R^d,\R^d\otimes\R^d)\big)$,
and the associated norms. We can also extend these function spaces to $T=\infty$
(we are considering functions defined on $[0,\infty)\times\R^d$), and the
corresponding norms are simply denoted by $\|\cdot\|_0$, $\|\cdot\|_{n+\theta}$ and so on.
These norms will also be used for functions in the spaces $C_b^{n+\theta}(\R^d)$,
$C_b^{n+\theta}(\R^d,\R^d)$ and $C_b^{n+\theta}(\R^d,\R^d\otimes\R^d)$ for all $n\ge0$, which are independent of time. There will be no confusion
according to the context.

We now recall the main result in \cite{Wang11} which will play an important role in the proof of
Theorem \ref{thm-Harnack}. To this end, let $\sigma:[0,\infty)\times\R^d\ra\R^d\otimes\R^d$ and
$b:[0,\infty)\times\R^d\to \R^d$ be two Borel measurable functions. We first list some assumptions
which are taken from \cite[Introduction]{Wang11}:
\begin{itemize}
\item[(A1)] for any $T>0$, there exists a constant $K_0>0$ such that
  $$\|\sigma(t,x)-\sigma(t,y)\|_{HS}^2+2\<b(t,x)-b(t,y),x-y\>\leq K_0|x-y|^2,\quad t\in[0,T],\, x,y\in\R^d;$$
\item[(A2)] for any $T>0$, there is a constant $\kappa_0>0$ such that
  $$a(t,x)=\sigma(t,x)\sigma(t,x)^\ast\geq \kappa_0^2\,\Id,\quad t\in[0,T],\, x\in\R^d;$$
\item[(A3)] for any $T>0$, there is a constant $\delta_0\ge0$ such that
  $$\big|(\sigma(t,x)-\sigma(t,y))(x-y)\big|\leq \delta_0|x-y|,\quad t\in[0,T],\, x,y\in\R^d.$$
\end{itemize}
It is well known that assumption (A1) ensures the pathwise uniqueness of solutions to \eqref{SDE}.
For the moment, we assume that SDE \eqref{SDE} has a unique strong solution $X_t$ and denote by $P_t$
the associated semigroup. F.-Y. Wang has shown in \cite[Theorem 1.1]{Wang11}
the following results on the Harnack inequalities for the semigroup $P_t$.

\begin{theorem}\label{sect-2-thm}
\begin{itemize}
\item[\rm(1)] If {\rm(A1)} and {\rm(A2)} hold, then
  $$P_t\log f(y)\leq \log P_t f(x)+\frac{K_0|x-y|^2}{2\kappa_0^2(1-e^{-K_0t})},\quad f\in\B_b(\R^d)
  \textrm{ with } f\ge 1,x,y\in\R^d, 0<t\le T.$$
\item[\rm(2)] If {\rm(A1)}, {\rm(A2)} and {\rm(A3)} hold, then for $p>(1+\delta_0/\kappa_0)^2$
and $\delta_p:=\max\{\delta_0,\kappa_0(\sqrt{p}-1)/2\}$,
  $$(P_t f(y))^p\leq (P_t f^p(x))\exp\bigg[\frac{K_0\sqrt{p}(\sqrt{p}-1)|x-y|^2}
  {4\delta_p[(\sqrt{p}-1)\kappa_0-\delta_p](1-e^{-K_0 t})}\bigg]$$
holds for all $f\in\mathcal B_b^+(\R^d)$, $x,y\in\R^d$ and $0<t\le T$.
\end{itemize}
\end{theorem}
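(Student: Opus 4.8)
The plan is to prove both parts by the \emph{coupling by change of measure} method, the standard route to such Harnack inequalities (cf.\ \cite{ArnaudonThalmaierWang2006, Wang-book}), whose mechanism in the semi-Lipschitz case is already recalled in the Introduction. Fix $T>0$, $0<t\le T$, and $x\ne y$ in $\R^d$, and write $K_0,\kappa_0,\delta_0$ for the constants from (A1)--(A3). Let $X_r=X_r(x)$ be the solution of \eqref{SDE} on $[0,t]$ driven by the Brownian motion $W$, which is pathwise unique by (A1). Construct a second process $Y_r$ with $Y_0=y$ by adding to the drift of \eqref{SDE} a coupling term $u_r$ that pulls $Y_r$ toward $X_r$, and set $Y_r\equiv X_r$ past the coupling time $\tau=\inf\{r:X_r=Y_r\}$. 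Since $a=\sigma\sigma^\ast$ is invertible by (A2), we may write $u_r=\sigma(r,Y_r)\beta_r$ with $\beta_r=\sigma(r,Y_r)^\ast a(r,Y_r)^{-1}u_r$, so that $|\beta_r|^2=\langle u_r,a(r,Y_r)^{-1}u_r\rangle\le|u_r|^2/\kappa_0^2$. Then $Y$ solves \eqref{SDE} driven by $\tilde W_r=W_r+\int_0^r\beta_s\,\d s$, and $\tilde W$ is a Brownian motion under the probability $\mathbb{Q}$ with $\d\mathbb{Q}/\d\P=R_t:=\exp\!\big(-\!\int_0^t\langle\beta_s,\d W_s\rangle-\tfrac12\!\int_0^t|\beta_s|^2\,\d s\big)$; by pathwise uniqueness, under $\mathbb{Q}$ the path $(Y_r)_{r\le t}$ is distributed as $X_\cdot(y)$. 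If the coupling succeeds, i.e.\ $\tau\le t$ and hence $X_t=Y_t$ $\P$-a.s., then
\begin{equation*}
P_tf(y)=\E_{\mathbb{Q}}f(Y_t)=\E_\P\big[R_tf(X_t)\big].
\end{equation*}

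From here, part (1) follows from the Gibbs variational inequality $\E_\P[R_tg]\le\log\E_\P[e^g]+\E_\P[R_t\log R_t]$ applied to $g=\log f(X_t)$, together with the identity $\E_\P[R_t\log R_t]=\E_{\mathbb{Q}}[\log R_t]=\tfrac12\E_{\mathbb{Q}}\int_0^t|\beta_s|^2\,\d s$ (obtained by rewriting $\log R_t$ in terms of the $\mathbb{Q}$-Brownian motion $\tilde W$). Part (2) follows from H\"older's inequality $P_tf(y)\le(P_tf^p(x))^{1/p}\big(\E_\P R_t^{p/(p-1)}\big)^{(p-1)/p}$, together with the moment bound $\E_\P R_t^{p/(p-1)}\le\exp\!\big(\tfrac{p}{2(p-1)^2}\,C\big)$, valid whenever $\int_0^t|\beta_s|^2\,\d s\le C$ holds $\P$-a.s. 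Hence everything reduces to choosing $u_r$ — equivalently $\xi_r$, where $u_r=\xi_r(X_r-Y_r)/|X_r-Y_r|$ — so that (i) the coupling succeeds before time $t$, and (ii) the Girsanov cost $\int_0^t|\beta_s|^2\,\d s\le\kappa_0^{-2}\int_0^t\xi_s^2\,\d s$ matches the constants stated in the theorem.

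For the distance $\rho_r=|X_r-Y_r|$, It\^o's formula combined with (A1) in the form $\|\sigma(r,X_r)-\sigma(r,Y_r)\|_{HS}^2+2\langle b(r,X_r)-b(r,Y_r),X_r-Y_r\rangle\le K_0\rho_r^2$ gives, after dropping a nonpositive term, $\d\rho_r\le(\tfrac{K_0}{2}\rho_r-\xi_r)\,\d r+\d N_r$ with $N$ a local martingale whose quadratic variation rate is $|(\sigma(r,X_r)-\sigma(r,Y_r))^\ast(X_r-Y_r)|^2\rho_r^{-2}\le\delta_0^2$ by (A3). For part (1) it is cleaner to run a \emph{bridge} coupling, $u_r=(X_r-Y_r)/(t-r)$: with $m_r=(t-r)^{-2}e^{-K_0r}$ one checks, using (A1) alone, that $\d(m_r\rho_r^2)\le 2m_r\langle X_r-Y_r,(\sigma(r,X_r)-\sigma(r,Y_r))\,\d W_r\rangle$, so $m_r\rho_r^2$ is a nonnegative local supermartingale, hence a.s.\ convergent as $r\uparrow t$; since $m_r\uparrow\infty$ this forces $\rho_r\to0$, i.e.\ $X_t=Y_t$ a.s. — an argument that uses only (A1) for the coupling and (A2) for the cost, consistent with part (1) not invoking (A3). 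For part (2) one takes the explicit deterministic profile $\xi_r$ proportional to $e^{-K_0r/2}$ (as in the semi-Lipschitz case), but scaled up enough that, after the substitution $\psi_r=e^{-K_0r/2}\rho_r$, the deterministic decrease overwhelms the $\delta_0$-sized fluctuations of $N$ on $[0,t]$, while keeping $\int_0^t\xi_s^2\,\d s$ deterministically bounded. A direct computation then yields the advertised constants: for part (1), $\tfrac12\E_{\mathbb{Q}}\int_0^t|\beta_s|^2\,\d s\le K_0|x-y|^2/[2\kappa_0^2(1-e^{-K_0t})]$; for part (2), the threshold $p>(1+\delta_0/\kappa_0)^2$ is exactly the feasibility condition for a profile meeting (i)--(ii) to exist, with $\delta_p=\max\{\delta_0,\kappa_0(\sqrt p-1)/2\}$ recording the balance between the fluctuation scale $\delta_0$ and the $p$-dependent room left by the moment bound on $R_t$.

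The main difficulty is precisely this balance. For additive noise ($\sigma$ constant, $\delta_0=0$) the coupling can be driven home by the elementary ODE comparison recalled in the Introduction, and no compatibility between $p$ and the coefficients is needed; for genuinely multiplicative noise the radial part of $|X_r-Y_r|$ carries an irreducible martingale of size $\sim\delta_0$, merely \emph{controlled} (not killed) by (A3), so the coupling drift must be enlarged to overpower it, yet for part (2) it must also be kept small enough that $\int_0^t|\beta_s|^2\,\d s$ stays deterministically bounded — two requirements pulling against each other, which is what forces $p>(1+\delta_0/\kappa_0)^2$ and produces the somewhat unwieldy constants. The remaining points are routine: the coupling SDE is solved up to $\tau$ by standard localization (at the hitting times of $\{\rho_r=1/n\}$ in the additive case, on $[0,t-1/n]$ in the bridge case) and a limiting argument; the law of $Y_\cdot$ under $\mathbb{Q}$ is identified via the pathwise uniqueness from (A1); and Novikov's condition for the Girsanov change of measure is immediate once $\int_0^t|\beta_s|^2\,\d s$ is $\P$-a.s.\ bounded (part (2)) or uniformly integrable on $[0,\tau]$ (part (1)).
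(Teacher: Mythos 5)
First, for context: the paper itself gives no proof of Theorem \ref{sect-2-thm}; it is quoted verbatim from \cite[Theorem 1.1]{Wang11}, so the only proof to compare with is Wang's, whose coupling-by-change-of-measure skeleton is indeed what you set up. Your Girsanov framework, the Young/H\"older reductions, and the observation that the bridge coupling $u_r=(X_r-Y_r)/(t-r)$ makes $m_r\rho_r^2$ a nonnegative local supermartingale under $\P$ (hence $X_t=Y_t$ a.s., using (A1) only) are all correct. The gaps sit exactly where multiplicative noise bites. For part (1), the entropy cost is $\tfrac12\E_{\mathbb{Q}}\int_0^t|\beta_s|^2\,\d s$, a $\mathbb{Q}$-expectation, and under $\mathbb{Q}$ your differential inequality for $\rho_r^2$ is no longer available: rewriting the equation for $X$ in terms of $\tilde W$ produces the drift correction $-\sigma(r,X_r)\sigma(r,Y_r)^\ast a(r,Y_r)^{-1}u_r$, which differs from $-u_r$ precisely when $\sigma(r,X_r)\neq\sigma(r,Y_r)$; controlling this mismatch is the whole point of the multiplicative case and is not addressed. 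Moreover, even granting the same supermartingale bound under $\mathbb{Q}$, your profile with $m_r=(t-r)^{-2}e^{-K_0r}$ only gives $\frac{1}{2\kappa_0^2}\,\E\int_0^t\frac{\rho_s^2}{(t-s)^2}\,\d s\le\frac{(e^{K_0t}-1)|x-y|^2}{2\kappa_0^2K_0t^2}$, which exceeds the stated constant by the factor $\big(\tfrac{2\sinh(K_0t/2)}{K_0t}\big)^2>1$; so ``a direct computation then yields the advertised constants'' is not true for this choice. (Also, uniform integrability of $\int_0^t|\beta_s|^2\,\d s$ is not Novikov's condition; that the exponential local martingale is a uniformly integrable martingale needs its own argument.)

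More seriously, the plan for part (2) cannot be carried out as described. You require a drift of deterministic magnitude $\xi_r$ which (i) forces coupling by time $t$ almost surely and (ii) keeps $\int_0^t\xi_s^2\,\d s$ deterministically bounded. These are incompatible once the radial martingale is genuinely present — and (A3) only bounds its rate by $\delta_0^2$, it does not remove it. Already in the scalar model $\d\rho_r=c\rho_r\,\d \bar W_r-\xi_r\,\d r$ (which fits (A1)--(A3)) one has $\rho_r=\mathcal{E}_r\big(\rho_0-\int_0^r\mathcal{E}_s^{-1}\xi_s\,\d s\big)$ with $\mathcal{E}$ a geometric Brownian motion, and $\P\big(\int_0^t\mathcal{E}_s^{-1}\xi_s\,\d s<\rho_0\big)>0$ whenever $\int_0^t\xi_s\,\d s<\infty$; so no deterministic square-integrable profile couples almost surely, and the moment bound $\E_\P R_t^{p/(p-1)}\le\exp\big(\tfrac{p}{2(p-1)^2}C\big)$ has no admissible deterministic $C$. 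Consequently your reading of $p>(1+\delta_0/\kappa_0)^2$ as ``the feasibility condition'' for such a profile is a heuristic resting on a construction that does not exist, not a derivation. In Wang's actual argument the added drift is not square-integrable on all of $[0,t]$ (it is only paid up to the coupling time), the Girsanov cost is genuinely random, and the core of the proof is an exponential-moment estimate for that random functional, obtained from the one-dimensional comparison for $\rho$ (martingale rate at most $\delta_0$ by (A3), ellipticity $\kappa_0$ by (A2)); the threshold $p>(1+\delta_0/\kappa_0)^2$ and the clipped optimizer $\delta_p=\max\{\delta_0,\kappa_0(\sqrt{p}-1)/2\}$ come out of optimizing that estimate. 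Your sketch replaces this key step with the impossible deterministic-cost scenario, so neither the condition on $p$ nor the precise constants in (1)--(2) are actually established.
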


\subsection{Proof of Theorem \ref{thm-Harnack}}

We need some more preparations. Fix any $T>0$.
We redefine $b$ and $\sigma$ on the space $[T,\infty)\times\R^d$ by setting
  $$b(t,x)=b(T,x)\mbox{ and } \sigma(t,x)=\sigma(T,x),\quad t\geq T,\,\ x\in \R^d.$$
Obviously, we have $b\in L^\infty\big([0,\infty),C_b^\theta(\R^d,\R^d)\big)$ and $\sigma\in L^\infty\big([0,\infty),C_b^{1+\theta}(\R^d,\R^d\otimes\R^d)\big)$. Given $\lambda>0$ and a function
$f\in L^\infty\big([0,\infty),C_b^\theta(\R^d,\R^d)\big)$, consider the equation
  \begin{equation}\label{resolvent-time}
  \partial_t u_\lambda + {L }_tu_\lambda - \lambda u_\lambda=f\quad\mbox{in } [0,\infty)\times\R^d,
  \end{equation}
where the operator ${L }_t$ associated to SDE \eqref{SDE} is defined by
  $${L }_tf(t,x)=\frac12\Tr[a(t,x)\nabla^2f(t,x)]+\<b(t,x),\nabla f(t,x)\>,$$
for some regular enough function $f:[0,\infty)\times\R^d\rightarrow \R^d$. Note that the
solution to \eqref{resolvent-time} is understood in the same sense as in \cite[p.10]{FGP}.

By the sketchy arguments of \cite[Remark 9]{Flandoli}, we have

\begin{lemma}\label{3-lem-1}
Assume that conditions {\rm(H3)} and {\rm(H4)} hold with some constant $\theta\in(0,1)$.
For any $f\in L^\infty\big([0,\infty),C_b^\theta(\R^d,\R^d)\big)$, there exists a
unique solution $u_\lambda$ to  equation \eqref{resolvent-time} in the space $L^\infty\big([0,\infty),C_b^{2+\theta}(\R^d,\R^d)\big)$ such that for any $\lambda\geq 1$,
  $$\|\nabla u_\lambda\|_0\leq
  \frac{C}{\sqrt\lambda}\|f\|_\theta,$$
where the constant $C>0$ is independent of $\lambda$.
\end{lemma}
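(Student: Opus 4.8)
The plan is to prove Lemma~\ref{3-lem-1} by reducing the time-dependent resolvent equation \eqref{resolvent-time} to the known elliptic results of \cite{FGP,Flandoli}, following the outline of \cite[Remark~9]{Flandoli}. The key observation is that the resolvent equation on $[0,\infty)\times\R^d$ can be recast via the standard space-time trick: introduce the extended spatial variable $(t,x)\in\R\times\R^d$ and view the parabolic operator $\partial_t+L_t-\lambda$ as an elliptic operator in $d+1$ variables that is, however, only first order (and with bounded H\"older coefficients) in the time direction. Since $b$ and $\sigma$ have been extended constantly for $t\ge T$ and lie in $L^\infty([0,\infty),C_b^\theta)$ and $L^\infty([0,\infty),C_b^{1+\theta})$ respectively, the coefficients of this extended operator are bounded and $\theta$-H\"older in $x$ uniformly in $t$; together with (H4), which gives the uniform ellipticity $a(t,x)\ge\kappa_0^2\,\Id$ after possibly adjusting constants, we are exactly in the setting where the Schauder-type a priori estimates used in \cite[Section~3]{FGP} and \cite[Lemma~4, Theorem~5]{Flandoli} apply. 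First I would state the precise functional-analytic framework: $u_\lambda$ solves \eqref{resolvent-time} in the sense of \cite[p.~10]{FGP}, i.e.\ $u_\lambda\in L^\infty([0,\infty),C_b^{2+\theta})$ with $\partial_t u_\lambda\in L^\infty([0,\infty),C_b^{\theta})$ and the equation holds pointwise a.e.\ in $t$.

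The core of the argument is the quantitative gradient bound $\|\nabla u_\lambda\|_0\le C\lambda^{-1/2}\|f\|_\theta$. For this I would use the interpolation/rescaling argument that already appears in the elliptic case in \cite{FGP,Flandoli}: one first obtains, from the Schauder estimates for the parabolic operator $\partial_t+L_t$, a bound of the form $\lambda\|u_\lambda\|_0+\|u_\lambda\|_{2+\theta}+\|\partial_t u_\lambda\|_\theta\le C\|f\|_\theta$ with $C$ independent of $\lambda\ge 1$ (this is where the constancy of the coefficients in $t$ and condition (H4) are essential, since they make the constant in the Schauder estimate uniform on the unbounded time interval). Then the gradient estimate follows by the classical interpolation inequality $\|\nabla u\|_0\le C\|u\|_0^{1/2}\|\nabla^2 u\|_0^{1/2}$ applied to $u=u_\lambda$, combined with $\|u_\lambda\|_0\le C\lambda^{-1}\|f\|_\theta$ and $\|\nabla^2 u_\lambda\|_0\le C\|f\|_\theta$, giving $\|\nabla u_\lambda\|_0\le C\lambda^{-1/2}\|f\|_\theta$. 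The uniqueness part is comparatively routine: if $v$ solves the equation with $f=0$ in $L^\infty([0,\infty),C_b^{2+\theta})$, then applying It\^o's formula to $e^{-\lambda t}v(t,X_t)$ along the solution of \eqref{SDE} and using the boundedness of $v$ gives $v\equiv 0$; alternatively one invokes the maximum principle for $\partial_t+L_t-\lambda$.

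I expect the main obstacle to be making the Schauder estimate \emph{uniform in $\lambda$ on the infinite time horizon} $[0,\infty)$ — in \cite{Flandoli} the time interval is finite and in \cite{FGP} the analogous statement is genuinely elliptic. The resolution is precisely the constant extension of $b,\sigma$ beyond $T$: this makes the problem translation-invariant in $t$ for $t\ge T$, so one can localize in time using cylinders of fixed size and patch the local estimates, the parameter $\lambda$ providing exponential decay that kills the boundary contributions. A secondary technical point is to verify that the solution concept of \cite[p.~10]{FGP} transfers correctly to the time-dependent operator $L_t$ whose zeroth-order behaviour in $t$ is only that of an $L^\infty$ (not continuous) function; but since \cite[Remark~9]{Flandoli} already asserts that the method carries over, I would cite that remark and the finite-horizon parabolic Schauder theory (e.g.\ as in \cite{KR05} or standard references) and then only spell out the $\lambda$-uniformity and the interpolation step in detail.
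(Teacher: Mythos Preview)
The paper does not actually prove this lemma: it merely states ``By the sketchy arguments of \cite[Remark~9]{Flandoli}, we have'' and then records the statement. Your proposal is therefore more detailed than the paper's treatment, and the outline you give --- parabolic Schauder theory for $\partial_t+L_t-\lambda$ to obtain $\|u_\lambda\|_{2+\theta}$ bounded uniformly in $\lambda\ge1$, the $L^\infty$ bound $\|u_\lambda\|_0\le C\lambda^{-1}\|f\|_0$, and then the interpolation $\|\nabla u_\lambda\|_0\le C\|u_\lambda\|_0^{1/2}\|\nabla^2 u_\lambda\|_0^{1/2}$ --- is precisely the standard route behind the results of \cite{FGP,Flandoli} that the paper is invoking, so your approach is in agreement with the paper's (implicit) one.

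One small caveat: your description of the ``space-time trick'' as recasting $\partial_t+L_t-\lambda$ as an elliptic operator in $d+1$ variables is misleading, since the operator is genuinely degenerate in the $t$-direction and elliptic Schauder theory does not apply as such; you should work directly with the parabolic Schauder estimates (which you in fact invoke later, citing \cite{KR05} and \cite[Remark~9]{Flandoli}). This is only a matter of presentation, not of substance --- the rest of your argument, including the identification of the $\lambda$-uniformity on $[0,\infty)$ as the only nontrivial point and the way you propose to handle it via the constant extension beyond $T$, is sound.
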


Now for $\lambda>0$, consider the parabolic system
  \begin{equation}\label{auxiliary-eq}
  \partial_t \psi_\lambda + {L }_t\psi_\lambda - \lambda \psi_\lambda=b,
  \quad (t,x)\in [0,\infty)\times\R^d.
  \end{equation}
By Lemma \ref{3-lem-1}, there exists a unique solution $\psi_\lambda\in
L^\infty\big([0,\infty),C_b^{2+\theta}(\R^d,\R^d)\big)$ to the equation \eqref{auxiliary-eq}.
Define $\Psi_\lambda(t,x)=x+\psi_\lambda(t,x)$ for $t\geq0$ and $x\in \R^d$. Then we have

\begin{lemma}\label{3-lem-2}
For $\lambda$ large enough such that $\|\nabla\psi_\lambda\|_0<1$,
the following statements hold:
\begin{itemize}
\item[\rm(i)] $\Psi_\lambda$ has bounded first and second order spatial
derivatives uniformly in $t\in[0,\infty)$ and, moreover, the second order derivative $\nabla^2\Psi_\lambda$ is globally $\theta$-H\"older continuous uniformly in $t\in[0,\infty)$;
\item[\rm(ii)] for any $t\geq 0$, $\Psi_\lambda(t,\cdot)$ is a $C^2$-diffeomorphism of $\R^d$;
\item[\rm(iii)] $\Psi_\lambda^{-1}(t,\cdot)$ has bounded first and second order spatial derivatives
uniformly in $t\in [0,\infty)$ and, moreover,
  $$\nabla\Psi_\lambda^{-1}(t,\cdot)(x)=\sum_{k\geq0}\big[-\nabla\psi_\lambda(t,\Psi_\lambda^{-1}(t,\cdot)(x))\big]^k,
  \quad x\in\R^d, t\geq 0.$$
\end{itemize}
\end{lemma}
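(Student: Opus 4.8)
The plan is to prove Lemma \ref{3-lem-2} by exploiting the regularity of $\psi_\lambda$ established in Lemma \ref{3-lem-1} together with the smallness condition $\|\nabla\psi_\lambda\|_0<1$, which holds once $\lambda$ is large. The three assertions are essentially the standard package of facts about a near-identity map, so the proof is mostly a matter of organizing the bookkeeping. I would first dispatch (i): since $\psi_\lambda\in L^\infty\big([0,\infty),C_b^{2+\theta}(\R^d,\R^d)\big)$ by Lemma \ref{3-lem-1}, the map $\Psi_\lambda(t,x)=x+\psi_\lambda(t,x)$ inherits exactly the same spatial regularity — its first-order derivative is $\Id+\nabla\psi_\lambda(t,\cdot)$ (bounded uniformly in $t$), its second-order derivative is $\nabla^2\psi_\lambda(t,\cdot)$ (bounded uniformly in $t$), and the latter is globally $\theta$-H\"older continuous uniformly in $t$ because $\nabla^2\psi_\lambda$ is. The Jacobian $\nabla\Psi_\lambda=\Id+\nabla\psi_\lambda$ has operator norm of $\nabla\psi_\lambda$ strictly less than $1$ by hypothesis, so $\nabla\Psi_\lambda(t,x)$ is invertible at every point, with $(\nabla\Psi_\lambda(t,x))^{-1}=\sum_{k\ge0}(-\nabla\psi_\lambda(t,x))^k$ a convergent Neumann series.

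For (ii) I would argue that for each fixed $t$, $\Psi_\lambda(t,\cdot)$ is a bijection of $\R^d$: injectivity follows from $|\Psi_\lambda(t,x)-\Psi_\lambda(t,y)|\ge(1-\|\nabla\psi_\lambda\|_0)|x-y|$ via the fundamental theorem of calculus along the segment joining $x$ and $y$; surjectivity (and properness) follows from the same lower bound combined with the fact that $\Psi_\lambda(t,\cdot)$ is a local diffeomorphism by the inverse function theorem (the Jacobian being everywhere invertible), so its image is open and closed, hence all of $\R^d$; this is a contraction/Hadamard–L\'evy type argument. Since $\Psi_\lambda(t,\cdot)\in C^2$ with everywhere-invertible derivative, the inverse function theorem gives $\Psi_\lambda^{-1}(t,\cdot)\in C^2$, so it is a $C^2$-diffeomorphism.

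For (iii), differentiating the identity $\Psi_\lambda\big(t,\Psi_\lambda^{-1}(t,\cdot)(x)\big)=x$ in $x$ yields $\nabla\Psi_\lambda\big(t,\Psi_\lambda^{-1}(t,\cdot)(x)\big)\,\nabla\Psi_\lambda^{-1}(t,\cdot)(x)=\Id$, hence $\nabla\Psi_\lambda^{-1}(t,\cdot)(x)=\big(\nabla\Psi_\lambda(t,\Psi_\lambda^{-1}(t,\cdot)(x))\big)^{-1}=\big(\Id+\nabla\psi_\lambda(t,\Psi_\lambda^{-1}(t,\cdot)(x))\big)^{-1}$, which is the stated Neumann series $\sum_{k\ge0}\big[-\nabla\psi_\lambda(t,\Psi_\lambda^{-1}(t,\cdot)(x))\big]^k$; the series converges in operator norm uniformly because $\|\nabla\psi_\lambda\|_0<1$, so $\|\nabla\Psi_\lambda^{-1}\|_0\le(1-\|\nabla\psi_\lambda\|_0)^{-1}$ is bounded uniformly in $t$. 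Differentiating once more and using the chain rule together with the already-established bounds on $\nabla\Psi_\lambda^{-1}$, $\nabla^2\psi_\lambda$ and boundedness of $\nabla\psi_\lambda$ gives a uniform bound on $\nabla^2\Psi_\lambda^{-1}$. The one point requiring a little care — and the closest thing to an obstacle — is the global invertibility in (ii): one must make sure the near-identity map is onto, not merely a local diffeomorphism, and this is exactly where the quantitative bound $\|\nabla\psi_\lambda\|_0<1$ (which forces $\Psi_\lambda(t,\cdot)$ to be proper) is used rather than just its smoothness. Everything else is a routine consequence of the inverse function theorem and Neumann series expansions.
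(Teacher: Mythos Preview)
Your proof is correct. The paper does not actually supply a proof of this lemma: it is stated immediately after the definition of $\Psi_\lambda$ and is implicitly regarded as a standard consequence of the regularity $\psi_\lambda\in L^\infty\big([0,\infty),C_b^{2+\theta}(\R^d,\R^d)\big)$ furnished by Lemma~\ref{3-lem-1} together with the near-identity structure $\Psi_\lambda=\Id+\psi_\lambda$ (the analogous result in the time-independent case is \cite[Lemma 5]{Flandoli}). Your argument is exactly the natural one and matches what is implicit there: part (i) is immediate from $\nabla\Psi_\lambda=\Id+\nabla\psi_\lambda$, $\nabla^2\Psi_\lambda=\nabla^2\psi_\lambda$; part (ii) combines the lower Lipschitz bound $(1-\|\nabla\psi_\lambda\|_0)|x-y|$ with the inverse function theorem to get a global $C^2$-diffeomorphism (one could equivalently observe that $x\mapsto y-\psi_\lambda(t,x)$ is a strict contraction and invoke Banach's fixed point theorem for surjectivity, which is perhaps slightly more direct than your open-and-closed argument, but both are fine); and part (iii) is the chain rule plus the Neumann series for $(\Id+\nabla\psi_\lambda)^{-1}$, with uniform bounds following from $\|\nabla\psi_\lambda\|_0<1$ and $\|\nabla^2\psi_\lambda\|_0<\infty$.
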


We choose $\lambda$ large enough such that
  \begin{equation*}\label{3-gradient}
  \|\nabla\psi_\lambda\|_0\leq \frac12.
  \end{equation*}
To simplify the notations, we shall omit the subscript $\lambda$ and write $\psi_t(x)$
(resp. $\Psi_t(x)$) instead of $\psi(t,x)$ (resp. $\Psi(t,x)$).
For $t\geq 0$ and $x\in\R^d$, set
  \begin{equation}\label{coefficients}
  \hat{\sigma}(t,x)=\nabla\Psi_t(\Psi_t^{-1}(x))\,\sigma(t,\Psi_t^{-1}(x)),\quad
  \hat{b}(t,x)=\lambda \psi_t(\Psi_t^{-1}(x)).
  \end{equation}
Consider the SDE on $\R^d$:
  \begin{equation}\label{conjugate-SDE}
  \d \hat X_t=\hat{\sigma}(t,\hat X_t)\,\d W_t+  \hat{b}(t,\hat X_t)\,\d t,\quad \hat X_0=y.
  \end{equation}
This equation is equivalent to \eqref{SDE} in the sense that if $X_t$ is a solution to
\eqref{SDE}, then $\hat X_t=\Psi_t(X_t)$ satisfies \eqref{conjugate-SDE} with $y=\Psi_0(x)$;
conversely, if $\hat X_t$ is a solution to \eqref{conjugate-SDE},
then $X_t=\Psi_t^{-1}(\hat X_t)$ solves \eqref{SDE} with $x=\Psi_0^{-1}(y)$.
From this we also deduce the relationship between their semigroups. Indeed, let $\hat P_t$
be the semigroup associated to \eqref{conjugate-SDE}, then for any $f\in\B_b(\R^d)$,
we have
  \begin{align}\label{relationship}
  P_tf(x)&=\E f(X_t(x))=\E\big[\big(f\circ\Psi_t^{-1}\big)\big(\Psi_t(X_t(x))\big)\big]\cr
  &=\E\big[\big(f\circ\Psi_t^{-1}\big)\big(\hat X_t(\Psi_0(x))\big)\big]
  =\hat P_t\big(f\circ\Psi_t^{-1}\big)(\Psi_0(x)).
  \end{align}

Now by Lemma \ref{3-lem-2}, we can verify that the assumptions (A1)--(A3) in Subsection 3.1
are satisfied by $\hat\sigma$ and $\hat b$. We collect the computations in the next lemma.

\begin{lemma}\label{sect-2-lem-3}
Under the hypotheses {\rm (H3)} and {\rm (H4)}, the coefficients
$\hat\sigma$ and $\hat b$ given by \eqref{coefficients} satisfy
the assumptions {\rm (A1)--(A3)} in Theorem $\ref{sect-2-thm}$. More
precisely, for any $T>0$, there exist positive constants $K_1,\kappa_1$ and
$\delta_1$ such that for all $0\le t\le T$ and $x$, $y\in\R^d$, it hold
\begin{itemize}
\item[\rm(1)] $\|\hat\sigma(t,x)-\hat\sigma(t,y)\|_{HS}^2+2\<\hat b(t,x)-\hat b(t,y),x-y\>\leq K_1|x-y|^2$;
\item[\rm(2)] $\hat a(t,x)=\hat\sigma(t,x)\hat\sigma(t,x)^\ast\geq \kappa_1^2\,\Id$;
\item[\rm(3)] $\big|(\hat\sigma(t,x)-\hat\sigma(t,y))(x-y)\big|\leq \delta_1|x-y|$.
\end{itemize}
\end{lemma}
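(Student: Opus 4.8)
The plan is to verify the three assumptions (A1)--(A3) directly from the explicit formulas \eqref{coefficients} for $\hat\sigma$ and $\hat b$, using the regularity of $\Psi_\lambda$, $\Psi_\lambda^{-1}$ and $\psi_\lambda$ provided by Lemmas \ref{3-lem-1} and \ref{3-lem-2}. The key observation is that all the relevant quantities --- $\nabla\Psi_t$, $\nabla^2\Psi_t$, $\Psi_t^{-1}$, $\nabla\Psi_t^{-1}$, $\sigma$ and $\nabla\sigma$ --- are bounded, uniformly in $t\in[0,\infty)$, and that $\nabla\Psi_t$ and $\nabla\sigma$ (hence $\hat\sigma$ through the chain rule) are globally $\theta$-H\"older continuous, again uniformly in $t$. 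Since $\nabla\psi_t=\nabla\Psi_t-\Id$ has $\|\nabla\psi_t\|_0\le 1/2$, we have $1/2\le|\nabla\Psi_t(x)\xi|$ for unit vectors, so $\nabla\Psi_t$ is uniformly invertible; combined with $\sigma\sigma^\ast\ge\kappa_0^2\Id$ from (H4) (here $\kappa_0^{-2}=\sup_t\|a^{-1}(t,\cdot)\|$ up to constants), this will give a lower bound $\hat a(t,x)\ge\kappa_1^2\Id$, which is exactly (A2), part (2) of the lemma.

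For (A3), part (3): writing $\hat\sigma(t,x)-\hat\sigma(t,y)$ via \eqref{coefficients} and the product/chain rule, each difference of the form $g(\Psi_t^{-1}(x))-g(\Psi_t^{-1}(y))$ with $g$ Lipschitz (namely $g=\nabla\Psi_t\cdot(\sigma\circ\Psi_t^{-1})$, whose Lipschitz constant is bounded uniformly in $t$ by the boundedness of $\nabla^2\Psi_t$, $\nabla\sigma$, $\sigma$, $\nabla\Psi_t$ and $\nabla\Psi_t^{-1}$) is controlled by $L\,|\Psi_t^{-1}(x)-\Psi_t^{-1}(y)|\le L\,\|\nabla\Psi_t^{-1}\|_0\,|x-y|$; so $\hat\sigma$ is in fact globally Lipschitz in $x$ uniformly in $t$, which yields (3) (and also the $\|\hat\sigma(t,x)-\hat\sigma(t,y)\|_{HS}^2$ part of (1)) with some constant $\delta_1$. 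The H\"older continuity claimed in Lemma \ref{3-lem-2}(i) for $\nabla^2\Psi_t$ is not needed for Lipschitz bounds but confirms $\hat\sigma\in L_{loc}^\infty([0,\infty),C_b^{1+\theta})$, consistent with (H3)-type regularity for the new equation.

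For the drift part of (A1): from $\hat b(t,x)=\lambda\psi_t(\Psi_t^{-1}(x))$ we get $\hat b(t,x)-\hat b(t,y)=\lambda[\psi_t(\Psi_t^{-1}(x))-\psi_t(\Psi_t^{-1}(y))]$, and since $\|\nabla\psi_t\|_0\le 1/2$ and $\|\nabla\Psi_t^{-1}\|_0$ is bounded (Lemma \ref{3-lem-2}(iii)), $\hat b$ is globally Lipschitz in $x$ uniformly in $t$ with constant $\le C\lambda$. Hence
$$2\langle\hat b(t,x)-\hat b(t,y),x-y\rangle\le 2C\lambda|x-y|^2,$$
and adding the Lipschitz bound for $\|\hat\sigma(t,x)-\hat\sigma(t,y)\|_{HS}^2\le\delta_1^2|x-y|^2$ gives (1) with $K_1=\delta_1^2+2C\lambda$. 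I expect the only mildly delicate point to be bookkeeping the constants so that they are genuinely independent of $t$ on all of $[0,\infty)$ --- this works precisely because $b$ and $\sigma$ were extended to be constant in $t$ for $t\ge T$ and the estimates in Lemmas \ref{3-lem-1}--\ref{3-lem-2} are stated with $[0,\infty)$-uniform norms; there is no further obstacle, since everything reduces to the chain rule plus the uniform bounds already established.
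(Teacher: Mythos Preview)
Your treatment of parts (1) and (2) is correct and essentially matches the paper: for (1) you establish that $\hat\sigma$ and $\hat b$ are globally Lipschitz in $x$, uniformly in $t$, via the chain rule and the uniform bounds on $\nabla\Psi_t$, $\nabla^2\psi_t$, $\nabla\sigma$ and $\nabla\Psi_t^{-1}$; for (2) you combine the lower bound on $a$ coming from (H4) with the uniform invertibility of $\nabla\Psi_t$ (a consequence of $\|\nabla\psi_t\|_0\le 1/2$). The paper carries out the same computations with explicit constants.

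There is, however, a genuine gap in your argument for (3). You deduce that $\hat\sigma$ is globally Lipschitz and then assert that this ``yields (3)''. But Lipschitzness only gives
\[
\big|(\hat\sigma(t,x)-\hat\sigma(t,y))(x-y)\big|
\le \|\hat\sigma(t,x)-\hat\sigma(t,y)\|_{HS}\,|x-y|
\le L\,|x-y|^2,
\]
which has the wrong power of $|x-y|$: condition (A3) demands a bound of the form $\delta_1|x-y|$, linear in $|x-y|$, valid for \emph{all} $x,y\in\R^d$. A Lipschitz estimate cannot deliver this for large $|x-y|$. The paper instead uses the uniform \emph{boundedness} of $\hat\sigma$ --- an ingredient you mention but do not invoke here: since $\|\hat\sigma(t,x)\|_{HS}\le\|\nabla\Psi_t\|_0\,\|\sigma\|_0$ is bounded independently of $(t,x)$, one simply estimates
\[
\|\hat\sigma(t,x)-\hat\sigma(t,y)\|_{HS}\le 2\sup_{s,z}\|\hat\sigma(s,z)\|_{HS}=:\delta_1,
\]
and (3) follows. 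So the fix is immediate with material you already have; but as written, your argument for (A3) does not go through.
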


\begin{proof}
(1) For $x,y\in\R^d$,
  \begin{align*}\label{sect-2-lem-3.1}
  \|\hat\sigma(t,x)-\hat\sigma(t,y)\|_{HS}
  &=\big\|\nabla\Psi_t(\Psi_t^{-1}(x))\,\sigma(t,\Psi_t^{-1}(x))
  -\nabla\Psi_t(\Psi_t^{-1}(y))\,\sigma(t,\Psi_t^{-1}(y))\big\|_{HS}\cr
  &\leq \big\|\nabla\Psi_t(\Psi_t^{-1}(x))-\nabla\Psi_t(\Psi_t^{-1}(y))\big\|_{HS}
  \|\sigma(t,\Psi_t^{-1}(x))\|_{HS}\cr
  &\hskip13pt +\|\nabla\Psi_t(\Psi_t^{-1}(y))\|_{HS}
  \big\|\sigma(t,\Psi_t^{-1}(x))-\sigma(t,\Psi_t^{-1}(y))\big\|_{HS}.
  \end{align*}
Since $\Psi_t(x)=x+\psi_t(x)$, we have $\nabla\Psi_t(x)=\Id+\nabla\psi_t(x)$, and
hence
  \begin{align*}
  \big\|\nabla\Psi_t(\Psi_t^{-1}(x))-\nabla\Psi_t(\Psi_t^{-1}(y))\big\|_{HS}
  &=\big\|\nabla\psi_t(\Psi_t^{-1}(x))-\nabla\psi_t(\Psi_t^{-1}(y))\big\|_{HS}\cr
  &\leq \|\nabla^2\psi_t\|_0|\Psi_t^{-1}(x)-\Psi_t^{-1}(y)|\\
  &\leq \|\nabla^2\psi_t\|_0 \|\nabla\Psi_t^{-1}\|_0|x-y|.
  \end{align*}
As $\sup_{t\geq0}\|\nabla\psi_t\|_0\leq 1/2$, the assertion (iii) in Lemma \ref{3-lem-2} implies that
  $$\|\nabla\Psi_t^{-1}\|_0\leq \sum_{k\geq0}\|\nabla\psi_t\|_0^k\leq 2.$$
Thus,
  \begin{equation*}\label{sect-2-lem-3.2}
  \big\|\nabla\Psi_t(\Psi_t^{-1}(x))-\nabla\Psi_t(\Psi_t^{-1}(y))\big\|_{HS}
  \leq 2\|\nabla^2\psi_t\|_0 |x-y|.
  \end{equation*}
Next, it holds that
  $$
  \aligned
  \big\|\sigma(t,\Psi_t^{-1}(x))-\sigma(t,\Psi_t^{-1}(y))\big\|_{HS}
  &\leq \|\nabla\sigma(t,\cdot)\|_0|\Psi_t^{-1}(x)-\Psi_t^{-1}(y)|\cr
  &\leq \|\nabla\sigma(t,\cdot)\|_0\|\nabla\Psi_t^{-1}\|_0|x-y|\\
  &\leq 2\|\nabla\sigma(t,\cdot)\|_0|x-y|.
  \endaligned
  $$
Therefore, we have
  $$
  \aligned
  \|\hat\sigma(t,x)-\hat\sigma(t,y)\|_{HS}
  &\leq 2\|\nabla^2\psi_t\|_0\|\sigma(t,\cdot)\|_0 |x-y|+ 2\|\nabla\Psi_t\|_0\|\nabla\sigma(t,\cdot)\|_0|x-y|\cr
  &=2\big(\|\nabla^2\psi_t\|_0\|\sigma(t,\cdot)\|_0+\|\nabla\Psi_t\|_0\|\nabla\sigma(t,\cdot)\|_0\big)|x-y|.
  \endaligned
  $$

On the other hand, by \eqref{coefficients} and the fact that $\|\nabla\psi_t\|_0\|\nabla\Psi_t^{-1}\|_0
\leq 1$ for all $t\geq0$,
  \begin{align*}
  |\<\hat b(t,x)-\hat b(t,y),x-y\>|&\leq |\hat b(t,x)-\hat b(t,y)|\, |x-y|\\
  &\leq \lambda\|\nabla\psi_t\|_0|\Psi_t^{-1}(x)-\Psi_t^{-1}(y)|\, |x-y|\\
  &\leq \lambda\|\nabla\psi_t\|_0\|\nabla\Psi_t^{-1}\|_0|x-y|^2\\
  &\leq \lambda|x-y|^2.
  \end{align*}
Combining all the estimates above, we get the desired estimate (A1) with
  $$K_1=4\sup_{t\geq0}\big(\|\nabla^2\psi_t\|_0\|\sigma(t,\cdot)\|_0
  +\|\nabla\Psi_t\|_0\|\nabla\sigma(t,\cdot)\|_0\big)^2+2\lambda<+\infty.$$

(2) For any $x,z\in\R^d$, it follows from the definition of $\hat\sigma$ that
  \begin{align*}
  \<\hat\sigma(t,x)\hat\sigma(t,x)^\ast z,z\>&=\big\<\nabla\Psi_t(\Psi_t^{-1}(x))\, \sigma(t,\Psi_t^{-1}(x))
  \,\sigma(t,\Psi_t^{-1}(x))^\ast\, [\nabla\Psi_t(\Psi_t^{-1}(x))]^\ast z,z\big\>\cr
  &=\big\<a(t,\Psi_t^{-1}(x))\, [\nabla\Psi_t(\Psi_t^{-1}(x))]^\ast z,[\nabla\Psi_t(\Psi_t^{-1}(x))]^\ast z\big\>.
  \end{align*}
Denote by $\lambda_i(t,x),1\leq i\leq d$, the eigenvalues of $a(t,x)$.
Under the hypothesis (H4), we have $\lambda_i(t,x)>0$ for all
$i\in\{1,\ldots,d\}$ and $(t,x)\in[0,\infty)\times \R^d$. By Cauchy's inequality,
  \begin{align*}
  \sum_{i=1}^d\frac 1{\lambda_i(t,x)}
  &=\Tr(a^{-1}(t,x))\leq \sqrt{d}\,\bigg(\sum_{i=1}^d(a_{ii}^{-1}(t,x))^2 \bigg)^{\frac12}\\
  &\leq \sqrt{d}\,\|a^{-1}(t,x)\|_{HS}\leq \sqrt{d}\,\|a^{-1}\|_0.
  \end{align*}
Hence
  $$\inf_{(t,x)\in[0,\infty)\times \R^d}\lambda_i(t,x)\geq \frac 1{\sqrt{d}\,\|a^{-1}\|_0}>0,\quad 1\leq i\leq d.$$
As a consequence,
  \begin{align*}
  \<\hat\sigma(t,x)\hat\sigma(t,x)^\ast z,z\>
  &\geq \frac 1{\sqrt{d}\,\|a^{-1}\|_0}\big|[\nabla\Psi_t(\Psi_t^{-1}(x))]^\ast z\big|^2.
  \end{align*}
Noting that $\nabla\Psi_t=\Id+\nabla\psi_t$, thus for any $y\in\R^d$,
  $$\big| [\nabla\Psi_t(y)]^\ast z\big|=\big|z+[\nabla\psi_t(y)]^\ast z\big|
  \geq |z|-\big| [\nabla\psi_t(y)]^\ast z\big|
  \geq |z|-\|\nabla\psi_t(y)\|_{HS}|z|\geq |z|/2,$$
thanks to $\|\nabla\psi_t\|_0=\sup_{y\in\R^d}\|\nabla\psi_t(y)\|_{HS}\leq 1/2$ for all $t\geq 0$.
Therefore
  $$\<\hat\sigma(t,x)\hat\sigma(t,x)^\ast z,z\>\geq \frac{|z|^2}{4\sqrt{d}\,\|a^{-1}\|_0},$$
which means that the condition (A2) holds with $\kappa_1=\big(4\sqrt{d}\,\|a^{-1}\|_0\big)^{-1/2}$.

(3) Since $\sigma$ is bounded, it is obvious that for any $(t,x)\in[0,\infty)\times \R^d$,
  $$\|\hat\sigma(t,x)\|_{HS}\leq \|\nabla\Psi_t(\Psi_t^{-1}(x))\|_{HS}\|\sigma(t,\Psi_t^{-1}(x))\|_{HS}
  \leq (d+\|\nabla\psi_t\|_0)\|\sigma\|_0\leq (d+1/2)\|\sigma\|_0.$$
Then the condition (A3) holds with $\delta_1=(2d+1)\|\sigma\|_0$.
\end{proof}

Now we are ready to prove Theorem \ref{thm-Harnack}.

\begin{proof}[Proof of Theorem \ref{thm-Harnack}]
By Lemma \ref{sect-2-lem-3} and Theorem \ref{sect-2-thm}, for any fixed $T>0$ we know that the semigroup $\hat P_t$
corresponding to the SDE \eqref{conjugate-SDE} satisfies the log-Harnack inequality:
  \begin{equation}\label{sect-1-thm-proof.1}
  \hat P_t\log g(y)\leq \log \hat P_t g(x)+\frac{K_1|x-y|^2}{2\kappa_1^2(1-e^{-K_1t})},
  \quad 0<t\le T,g\in \B_b(\R^d)\textrm{ with }g\geq 1, x,y\in\R^d,
  \end{equation}
and the Harnack inequality with power: for $p>(1+\delta_1/\kappa_1)^2$
and $\delta_p:=\max\{\delta_1,\kappa_1(\sqrt{p}-1)/2\}$, it holds
  \begin{equation*}\label{sect-1-thm-proof.2}
  (\hat P_t g(y))^p\leq (\hat P_t g^p(x))\exp\bigg[\frac{K_1\sqrt{p}(\sqrt{p}-1)|x-y|^2}
  {4\delta_p[(\sqrt{p}-1)\kappa_1-\delta_p](1-e^{-K_1 t})}\bigg]
  \end{equation*}
for all $0<t\le T$, $g\in\mathcal B_b^+(\R^d)$ and $x,y\in\R^d$. Now we shall apply the
relation \eqref{relationship} to show that the semigroup $P_t$ associated with the SDE
\eqref{SDE} also satisfies the same Harnack inequalities (possibly with different constants).

(1) Take $0<t\le T$ and $x,y\in\R^d$. For $f\in\B_b(\R^d)$ with $f\geq1$, set $g=f\circ\Psi_t^{-1}$.
We have by \eqref{relationship} and \eqref{sect-1-thm-proof.1},
  \begin{align*}
  P_t \log f(y)&=\hat P_t \log g(\Psi_0(y))
  \leq \log\hat P_t g(\Psi_0(x))+\frac{K_1|\Psi_0(x)-\Psi_0(y)|^2}{2\kappa_1^2(1-e^{-K_1t})}.
  \end{align*}
Since $|\Psi_0(x)-\Psi_0(y)|\leq |x-y|+|\psi_0(x)-\psi_0(y)|\leq 3|x-y|/2$,
again by \eqref{relationship}, we have
  $$P_t \log f(y)\leq \log P_t f(x)+\frac{2K_1|x-y|^2}{\kappa_1^2(1-e^{-K_1t})},
  \quad f\in \B_b(\R^d)\textrm{ with }f\geq 1.  $$

(2) In the same way, for $f\in\mathcal B_b^+(\R^d)$, let $g=f\circ\Psi_t^{-1}$. Then, for any $0<t\le T$,
  \begin{align*}
  (P_t f(y))^p=(\hat P_t g(\Psi_0(y)))&\leq [\hat P_t g^p(\Psi_0(x))]
  \exp\bigg[\frac{K_1\sqrt{p}(\sqrt{p}-1)|\Psi_0(x)-\Psi_0(y)|^2}
  {4\delta_p[(\sqrt{p}-1)\kappa_1-\delta_p](1-e^{-K_1 t})}\bigg]\cr
  &\leq [P_t f^p(x)]\exp\bigg[\frac{K_1\sqrt{p}(\sqrt{p}-1)|x-y|^2}
  {\delta_p[(\sqrt{p}-1)\kappa_1-\delta_p](1-e^{-K_1 t})}\bigg].
  \end{align*}
The proof is complete.
\end{proof}

\section{Harnack Inequalities for SDEs Driven by Symmetric Stable Processes}
In this section, we consider the following SDE
  \begin{equation}\label{stable1}
  \d X_t=b(X_t)\,\d t+\d Z_t,
  \end{equation}
where $Z_t$ is a symmetric $\alpha$-stable process with $\alpha\ge
1$ and $b\in C_b^\beta(\R^d;\R^d)$ with $\beta>1-\frac{\alpha}{2}$.
In \cite[Theorem 1.1]{Priola}, Priola proved that the solution $X_t$ to \eqref{stable1}
is still a flow of homeomorphisms which are $C^1$-functions on $\R^d$.
His proof, similar to that in \cite{Flandoli}, is
based on the so-called It\^o--Tanaka trick which transforms the original SDE into a new one with
regular drift coefficient; see \cite[(4.10) or (4.14)]{Priola}. We tried to establish
Harnack inequalities for the semigroup $(P_t)_{t\ge0}$ associated to SDE
\eqref{stable1} in this framework, but in vain. The difficulty comes from the term
on the right hand side of \cite[(4.14)]{Priola}, which involves an integral with respect to the
Poisson random measure.

A possible approach to bypass the difficulty is to use the regularization
approximations of the underlying subordinator; see \cite{WW14} for the recent study of dimension-free Harnack inequalities for a class of stochastic equations driven by a L\'{e}vy noise containing a subordiante Brownian motion.
In this case, since the vector field $b$ is time-independent, the drift coefficient
in the new SDE is variable-separated,
and the diffusion coefficient is the identity matrix (cf. \cite[(2.3)]{Zhang13b} or \cite[(3.2)]{WW14}).
At first glance, it seems that one can establish the Harnack inequalities in the same way as
above by using the transform in \cite[page 14]{FGP}. However, this requires an explicit
expression for the constant $C$ in the Schauder estimate (see \cite[Theorem 2]{FGP}), especially
its dependence on the time-change factor. After checking the proofs of
\cite[Theorem 2 and Lemma 4]{FGP}, unfortunately, we did not obtain useful estimates on $C$.

On the other hand, by using the explicit heat kernel estimates, we can prove

\begin{proposition}There
exists a constant $C>1$ such that for any $f\in \B_b^+(\R^d)$, $T>0$
and $x,y\in\R^d$,
\begin{equation}\label{stable} P_Tf(x)\le
 C\left(1+\frac{|x-y|}{(T\wedge1)^{1/\alpha}}\right)^{d+\alpha}P_Tf(y).\end{equation}\end{proposition}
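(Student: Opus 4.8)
The plan is to derive \eqref{stable} from two-sided heat kernel estimates for the transition density of the SDE \eqref{stable1}. First I would invoke the known regularity/heat kernel results for $\alpha$-stable processes with H\"older drift: under the hypotheses $\alpha\ge 1$ and $b\in C_b^\beta(\R^d;\R^d)$ with $\beta>1-\alpha/2$, the semigroup $(P_t)_{t\ge 0}$ admits a transition density $p_t(x,y)$ satisfying, for $t\le 1$, a two-sided bound of the form
\begin{equation*}
c_1\,t\big(|x-y|+t^{1/\alpha}\big)^{-d-\alpha}\le p_t(x,y)\le c_2\,t\big(|x-y|+t^{1/\alpha}\big)^{-d-\alpha},
\end{equation*}
which is precisely the content of the explicit kernel estimates referenced in the paragraph preceding the proposition (this is the Chen--Zhang type estimate for stable processes with drift). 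For $t\ge 1$ one uses the semigroup/Chapman--Kolmogorov property together with the $t\le 1$ bound to produce a comparable estimate with $t\wedge 1$ in place of $t$, at the cost of enlarging the constants; this is where the truncation $(T\wedge 1)^{1/\alpha}$ in \eqref{stable} originates.

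Granting these bounds, the proof of \eqref{stable} is a direct pointwise comparison of densities. For $f\in\B_b^+(\R^d)$, $T>0$ and $x,y\in\R^d$ write
\begin{equation*}
P_Tf(x)=\int_{\R^d}p_T(x,z)f(z)\,\d z,\qquad P_Tf(y)=\int_{\R^d}p_T(y,z)f(z)\,\d z,
\end{equation*}
and since $f\ge 0$ it suffices to bound $p_T(x,z)$ by a constant multiple of $p_T(y,z)$ uniformly in $z$. Using the two-sided estimate (with $T$ replaced by $T\wedge 1$ in the relevant factors), the ratio is controlled by
\begin{equation*}
\frac{p_T(x,z)}{p_T(y,z)}\le \frac{c_2}{c_1}\left(\frac{|y-z|+(T\wedge 1)^{1/\alpha}}{|x-z|+(T\wedge 1)^{1/\alpha}}\right)^{d+\alpha}.
\end{equation*}
By the triangle inequality $|y-z|\le |x-z|+|x-y|$, so the numerator is at most $|x-z|+|x-y|+(T\wedge 1)^{1/\alpha}\le \big(|x-z|+(T\wedge 1)^{1/\alpha}\big)\big(1+|x-y|/(T\wedge 1)^{1/\alpha}\big)$, since $|x-z|+(T\wedge 1)^{1/\alpha}\ge (T\wedge 1)^{1/\alpha}$. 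Hence the ratio is at most $(c_2/c_1)\big(1+|x-y|/(T\wedge 1)^{1/\alpha}\big)^{d+\alpha}$, and integrating against $f(z)\,\d z$ gives \eqref{stable} with $C=c_2/c_1>1$.

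The main obstacle is not the comparison argument, which is routine, but securing the two-sided heat kernel estimate in the stated generality and, in particular, the passage from small time to arbitrary time $T$. For $t\le 1$ one must appeal to the sharp kernel bounds for $\alpha$-stable processes perturbed by a H\"older drift (valid exactly in the regime $\beta>1-\alpha/2$, $\alpha\ge 1$); for $t\ge 1$ one writes $p_t(x,z)=\int p_{1/2}(x,w)p_{t-1/2}(w,z)\,\d w$ and combines the upper bound on $p_{1/2}(x,w)$ (uniformly comparable to $(|x-w|+1)^{-d-\alpha}$) with the total-mass normalization $\int p_{t-1/2}(w,z)\,\d z=1$, and symmetrically for the lower bound, to conclude a bound of the form $c_1(|x-z|+1)^{-d-\alpha}\le p_t(x,z)\le c_2(|x-z|+1)^{-d-\alpha}$ for $t\ge 1$. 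Care is needed to verify that the constants $c_1,c_2$ can be chosen uniformly in $t$; once this is in hand, replacing $t$ by $t\wedge 1$ in the small-time estimate unifies both regimes and the remainder of the argument proceeds as above.
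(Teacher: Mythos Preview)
Your small-time argument ($T\le 1$) is essentially the paper's: the form $t(|x-y|+t^{1/\alpha})^{-d-\alpha}$ is equivalent to the form $t^{-d/\alpha}\wedge t|x-y|^{-d-\alpha}$ used in the paper, and your triangle-inequality ratio computation is exactly the content of the lemma the paper cites from \cite{W}. So up to time $1$ the two proofs coincide.

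The gap is in your passage to $T>1$. You propose to prove a two-sided bound
\[
c_1\big(|x-z|+1\big)^{-d-\alpha}\le p_t(x,z)\le c_2\big(|x-z|+1\big)^{-d-\alpha},\qquad t\ge 1,
\]
uniformly in $t$, and then rerun the ratio argument. Neither inequality can hold uniformly in $t$. For the lower bound, take $b\equiv 0$: then $p_t(x,x)=t^{-d/\alpha}q_1(0)\to 0$ as $t\to\infty$, while the claimed bound gives $p_t(x,x)\ge c_1>0$. For the upper bound, take $b\equiv v$ with $v\neq 0$ (bounded and trivially H\"older): then $p_t(x,x+vt)=t^{-d/\alpha}q_1(0)$, whereas your bound would force $p_t(x,x+vt)\le c_2(|v|t+1)^{-d-\alpha}$, which fails for large $t$ because $d/\alpha<d+\alpha$. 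The Chapman--Kolmogorov sketch you give also mixes up the integration variable: in $p_t(x,z)=\int p_{1/2}(x,w)p_{t-1/2}(w,z)\,\d w$ the mass identity $\int p_{t-1/2}(w,z)\,\d z=1$ is in the wrong variable to be of use.

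The fix is much simpler than extending the kernel estimate: work at the semigroup level. Write $P_Tf=P_{T\wedge 1}\big(P_{(T-1)^+}f\big)$ and apply the already-proved Harnack inequality for $P_{T\wedge 1}$ to the nonnegative bounded function $g:=P_{(T-1)^+}f$. This immediately gives
\[
P_Tf(x)=P_{T\wedge 1}g(x)\le C\Big(1+\tfrac{|x-y|}{(T\wedge 1)^{1/\alpha}}\Big)^{d+\alpha}P_{T\wedge 1}g(y)=C\Big(1+\tfrac{|x-y|}{(T\wedge 1)^{1/\alpha}}\Big)^{d+\alpha}P_Tf(y),
\]
with no need for any kernel information beyond $t\le 1$. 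This is precisely how the paper handles the large-time case.
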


\begin{proof}
Note that the drift term $b$ is bounded, and so it belongs to the
Kato class (see e.g. \cite[Definition 1.1]{CW}). Then, for
$\alpha>1$, according to \cite[Corollary 1.3]{CW}, the process $X_t$
has a jointly continuous transition density function $p(t,x,y)$ with
respect to the Lebesgue measure on $\R^d$. Moreover, there exists a
constant $c\ge 1$ such that  for all $(t,x,y)\in (0,1]\times
\R^d\times \R^d$,
\begin{equation}\label{den1}c^{-1}\left(t^{-d/\alpha}\wedge\frac{t}{|x-y|^{d+\alpha}}\right)\le
p(t,x,y)\le
c\left(t^{-d/\alpha}\wedge\frac{t}{|x-y|^{d+\alpha}}\right).\end{equation}
On the other hand, noticing that $b\in C_b^\beta(\R^d;\R^d)$ with
$\beta>1-\frac{\alpha}{2}$, we know from \cite[Theorem 1.1]{XZ} (see
also \cite[Theorem 3.6]{XZ}) that \eqref{den1} also holds for
$\alpha=1$.

Furthermore, having \eqref{den1} at hand and following the proof of
\cite[Lemma 2.1]{W}, we can get that for any $t\in (0,1]$ and
$x,y,z\in \R^d$,
\begin{equation}\label{den2}\frac{p(t,x,z)}
{p(t,y,z)}\le
2^{\alpha+d}c^2\left(1+\frac{|x-y|}{t^{1/\alpha}}\right)^{d+\alpha}.\end{equation}
Therefore, for any $f\in \B_b^+(\R^d)$, $t\in (0,1]$ and
$x,y\in\R^d$,
\begin{align*}
  P_tf(x)&=\int f(z)p(t,x,z)\,dz=\int f(z)\frac{p(t,x,z)}{p(t,y,z)}p(t,y,z)\,dz\\
  &\le \left(\max_{z\in\R^d}\frac{p(t,x,z)}{p(t,y,z)}\right) \int f(z)p(t,y,z)\,dz\\
  &=C\left(1+\frac{|x-y|}{t^{1/\alpha}}\right)^{d+\alpha}P_tf(y),
  \end{align*}
where $C$ is a positive constant independent of $x,y,z$ and $t$. For
any $t>0$, we write
$$P_tf=P_{t\wedge1}P_{(t-1)^+}f.$$ This, together with the inequality
above, yields the required assertion.
\end{proof}

Surely, the Harnack inequality \eqref{stable} is not satisfactory in the sense that
$C>1$, which means that such inequality is not sharp for the case
$x=y$. Nonetheless, since the process has the transition density
function, it has the strong Feller property, and so even for
$C>1$, we still have some applications, e.g., long time
behaviors and properties of invariant measure.

\end{document}